\documentclass[11pt,a4paper]{amsart}
\usepackage[T1]{fontenc}
\usepackage[utf8]{inputenc}
\usepackage[english]{babel}
\usepackage{amsmath,amssymb,amsthm,mathtools}
\usepackage[margin=2.6cm]{geometry}
\usepackage[expansion=false]{microtype}
\usepackage{needspace}
\usepackage[hidelinks]{hyperref}
\hypersetup{pdftitle={Every introenumerable set contains a uniformly introreducible subset},pdfauthor={Patrizio Cintioli}}
\newtheorem{theorem}{Theorem}[section]
\newtheorem{lemma}[theorem]{Lemma}
\newtheorem{proposition}[theorem]{Proposition}
\newtheorem{corollary}[theorem]{Corollary}
\theoremstyle{definition}
\newtheorem{definition}[theorem]{Definition}
\newtheorem*{questionGHPT}{Question 1.7}
\theoremstyle{remark}
\newtheorem{remark}[theorem]{Remark}
\newcommand{\N}{\omega}
\newcommand{\ui}{\leq_T^{\mathrm{ui}}}
\newcommand{\intr}{\leq_T^{\mathrm i}}
\newcommand{\uie}{\leq_e^{\mathrm{ui}}}
\newcommand{\ice}{\leq_{\mathrm{c.e.}}^{\mathrm{i}}}
\newcommand{\uice}{\leq_{\mathrm{c.e.}}^{\mathrm{ui}}}
\newcommand{\Graph}{\operatorname{Graph}}
\newcommand{\rk}{\operatorname{rk}}
\newcommand{\ot}{\operatorname{ot}}
\newcommand{\rng}{\operatorname{rng}}
\newcommand{\HYP}{\operatorname{HYP}}
\newcommand{\FF}{\mathcal F}
\newcommand{\GG}{\mathcal G}
\newcommand{\HH}{\mathcal H}
\newcommand{\KK}{\mathcal K}
\newcommand{\TT}{\mathcal T}
\newcommand{\OO}{\mathcal O}
\newcommand{\res}{\upharpoonright}

\title[Uniformly introreducible subsets of introenumerable sets]
{Every introenumerable set contains a uniformly introreducible subset}
\author{Patrizio Cintioli}

\address{Mathematics Division, School of Science and Technology, University of Camerino, Italy}
\email{patrizio.cintioli@unicam.it}
\subjclass[2020]{Primary 03D99; Secondary 05D10}

\keywords{introreducibility, uniform introreducibility,
introenumerability, uniform introenumerability, Ramsey theory}

\begin{document}

\begin{abstract}
We prove that every infinite introenumerable set contains an
infinite uniformly introreducible subset, answering both parts
of Question 1.7 of Greenberg, Harrison-Trainor, Patey, and
Turetsky affirmatively.
Moreover, a single procedure computes the original set from
every infinite subset of the selected set.
The proof combines the uniformization and refinement results
of these authors with Ramsey-theoretic methods and a canonical
reconstruction argument.
We also obtain refinements controlling the ordinals and
hyperjumps of the selected subsets.
\end{abstract}

\maketitle

\section{Introduction}

An infinite set $A$ is introreducible if its characteristic function
is computable from every infinite subset of $A$. The information
carried by the set is
therefore preserved under arbitrary infinite thinning, although the
procedure that recovers it may depend on the subset. Uniform
introreducibility requires one procedure to work for all infinite
subsets. These two notions need not coincide: Lachlan's construction,
presented in \cite[Theorem 4.1]{Jockusch}, gives an introreducible set
that is not uniformly introreducible.

A complementary direction is the study of introimmunity: for a
specified reducibility, an introimmune set cannot be recovered from
any subset omitting infinitely many of its elements.
Recent existence results for introimmune sets with respect to several
reducibilities are obtained in \cite{CintioliIntroimmunity}.

A standard example of a uniformly introreducible set is given by
the family $F_A$ of all finite initial segments of the characteristic
function of a set $A$ of natural numbers. Here finite binary strings
are identified with natural numbers using a fixed computable
bijection. It is easy to see that $F_A$ is uniformly introreducible.
Moreover, $A$ and $F_A$ are Turing equivalent.
Thus every Turing degree contains a uniformly introreducible set;
see the discussion of Dekker sets in \cite[Section 1]{GHPT}.

This recoding, however, need not produce a subset of $A$. The problem
studied here is instead whether one can obtain uniform
introreducibility by selecting an infinite subset of the original set.

The corresponding enumeration notion provides a broader setting for
this problem. An infinite set is introenumerable if it is c.e. relative
to each of its infinite subsets, and uniformly introenumerable if one
relative c.e. operator works for all of them. Jockusch
\cite[p.~535]{Jockusch} asked whether every infinite uniformly
introenumerable set contains an infinite uniformly introreducible
subset. Greenberg, Harrison-Trainor, Patey, and Turetsky
\cite[Theorem 1.4]{GHPT} proved that it does. They also posed the
following two-part question.

\begin{questionGHPT}[\cite{GHPT}]
Does every infinite introreducible set have an infinite uniformly
introreducible subset? Does every infinite introenumerable set have
an infinite uniformly introenumerable subset?
\end{questionGHPT}

Subsequent work of Em gives a related sufficient condition.
In \cite[Proposition 4.20]{Em}, an introreducible set $A$ is shown
to have a uniformly introreducible subset when it is finitely
decomposable, that is, when
\[
  A=A_0\cup\cdots\cup A_n
\]
for some infinite sets $A_0,\ldots,A_n$ such that
$A\leq_T^{\mathrm{ui}} A_i$ for every $i\leq n$.
Our result does not establish this finite-decomposition condition for
arbitrary introreducible sets. Rather, the proof bypasses it and uses
a different strategy based on uniformization, Ramsey-theoretic
certification, and canonical reconstruction.

We prove that every infinite introenumerable set contains an infinite
uniformly introreducible subset. This gives affirmative answers to
both parts of \cite[Question 1.7]{GHPT}. In fact, the construction
provides a nonempty class of such subsets with a common reconstruction
procedure, and every infinite subset of any member of the class also
computes the original set $A$ by a second common procedure. The precise
statement includes a definability bound on this class, which permits
separate applications of basis theorems to control the ordinal and
hyperjump of a selected member.

\medskip
\paragraph{\textit{Statement and conventions}}

We write $\N=\{0,1,2,\ldots\}$.
For a set $B\subseteq\N$, let
\[
  [B]^\omega=\{D\subseteq B:D\text{ is infinite}\}.
\]
We also write $[B]^{<\omega}$ for the family of all finite
subsets of $B$, including the empty set, and
$[B]^k=\{F\subseteq B:|F|=k\}$ for $k\in\N$.

For $A\subseteq\N$, we identify $A$ with its characteristic
function and write
\[
  \Graph(A)=\{(n,A(n)):n\in\N\},
\]
using a computable coding of pairs.
We write $A\leq_T B$ if the characteristic function of $A$
is computable with oracle $B$.
Fix an effective enumeration $(\Phi_e)_{e\in\N}$ of the
ordinary Turing functionals.
The notation $\Phi_e^D=A$ means that $\Phi_e^D$ is total
and agrees with the characteristic function of $A$.

For $A\subseteq\N$ and an infinite set $B\subseteq\N$, define
\[
\begin{aligned}
  A\intr B
  &\iff
  (\forall D\in[B]^\omega)\; A\leq_T D\\
  &\iff
  (\forall D\in[B]^\omega)(\exists e\in\N)\;
  \Phi_e^D=A,
\end{aligned}
\]
and
\[
  A\ui B
  \iff
  (\exists e\in\N)(\forall D\in[B]^\omega)\;
  \Phi_e^D=A.
\]
Thus $A\intr B$ means that every infinite subset of $B$
computes $A$, with the reduction index allowed to depend
on that subset.
In contrast, $A\ui B$ requires a single index that works
for every infinite subset of $B$.

An infinite set $A$ is called \emph{introreducible} if
$A\intr A$, and \emph{uniformly introreducible} (UI) if
$A\ui A$.
Explicitly,
\[
  A\ui A
  \iff
  (\exists e\in\N)(\forall D\in[A]^\omega)
  (\forall n\in\N)\;
  \Phi_e^D(n)\downarrow=A(n).
\]
The witnessing index may depend on $A$, but it must be
independent of the infinite subset $D$.

For an infinite set $B$, write $X\ice B$ when $X$ is c.e. relative
to every member of $[B]^\omega$, and $X\uice B$ when one relative
c.e. operator works for all these oracles. Write $X\uie B$ when
one ordinary positive enumeration operator enumerates $X$ from
every member of $[B]^\omega$.
An infinite set $A$ is introenumerable if $A\ice A$, and uniformly
introenumerable if $A\uice A$.
For infinite sets, $X\uice B$ and $X\uie B$ are equivalent by
\cite[Proposition 2.1]{GHPT}.
Every introreducible set is introenumerable, and every uniformly
introreducible set is uniformly introenumerable.

Every functional described as ordinary receives only the oracles
explicitly indicated. All quantifiers over infinite subsets are
interpreted in the ambient universe.

We use the standard lightface notation $\Sigma^1_1(A)$ and
$\Delta^1_1(A)$, relativized to the oracle $A$; see either
\cite[Chapters~4--5]{Montalban} or \cite[Section~IV.2]{Odifreddi1989}
for background.

The following theorem gives the common refinement statement discussed
above. The two functionals have distinct targets: one recovers the
original set $A$, and the other recovers the selected subset $R$.

\begin{theorem}\label{thm:main}
Every infinite introenumerable set $A$ contains an infinite uniformly
introreducible subset.
More precisely, there exist a nonempty $\Sigma^1_1(A)$ class
$\KK\subseteq[A]^\omega$ and two ordinary functionals
$\Psi,\Theta$ such that
\begin{equation}\label{eq:main}
  \forall R\in\KK\ \forall D\in[R]^\omega\qquad
  \Psi^D=A\quad\text{and}\quad\Theta^D=R.
\end{equation}
\end{theorem}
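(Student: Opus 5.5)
The plan is to reduce to the uniformly introenumerable case handled by \cite[Theorem 1.4]{GHPT}. Once that case is available, the proof should go through two steps: first pass from $A$ to a subset on which a single enumeration operator works, then use the GHPT refinement argument to upgrade enumeration to computation, both for $A$ and for the selected set itself.

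\emph{Step 1 (uniformization).} Since $A\ice A$, every $D\in[A]^\omega$ has some index $e$ with $W_e^D=A$. For each $e$ the class $C_e=\{D\in[A]^\omega : W_e^D=A\}$ is Borel (indeed arithmetical in $A$), and $\bigcup_e C_e=[A]^\omega$. By the Galvin--Prikry theorem applied to $C_0$, then $C_1$ inside the resulting subset, and so on, we get the following: either some $B\in[A]^\omega$ has $[B]^\omega\subseteq C_e$ for some $e$, or we can diagonalize (fusion) to a $B$ with $[B]^\omega\cap C_e=\emptyset$ for all $e$. The second alternative contradicts the covering, so we obtain $B\in[A]^\omega$ and an index $e$ with $A\uie B$. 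I expect this is what the paper means by the GHPT uniformization results. Replacing the ordinary operator by a positive enumeration operator via \cite[Proposition 2.1]{GHPT}, we get $A\uie B$, hence also $B\uie B$ once $B$ is restricted to a set that can be enumerated from $A$ on inputs.

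\emph{Step 2 (refinement to computation).} From $A\uie B$ with $B\subseteq A$, every $D\in[B]^\omega$ enumerates $A$ uniformly. Then $D$ computes $A$ as well: $A$ is infinite, and $D$ enumerates $A$. To decide $n\notin A$ we need more, so I would follow the GHPT proof of Theorem 1.4. Thin $B$ further, using Ramsey's theorem on pairs and triples with colorings defined from the enumeration operator, to obtain $R\subseteq B$ with the following property: for every finite $F\subseteq R$ above $n$, the axioms of the operator that enumerate elements $\le n$ use only elements of $R$ bounded by a computable function. This yields certification. Once enough elements of $D$ beyond some threshold have appeared, any $m\le n$ not yet enumerated is certified to lie outside $A$; this gives $\Psi$. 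For $\Theta$, we use canonical reconstruction. Make $R$ sparse relative to $A$, for example by choosing $R$ so that membership of $x$ in $R$ is determined by $A\restriction x$ via a fixed rule that Ramsey homogeneity makes consistent. Then $\Theta^D$ computes $A$ by $\Psi$ and reads off $R$.

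\emph{Definability.} Let $\KK$ be the class of $R\in[A]^\omega$ satisfying the homogeneity conditions from Step 2 together with the uniform condition from Step 1 for the fixed indices. Each condition is arithmetical in $(A,R)$ except the clause ``for all $D\in[R]^\omega$'', which is $\Pi^1_1$. I would replace that clause by its arithmetical combinatorial consequence, namely homogeneity for the finitely many colorings, which implies the property. Then $\KK$ is $\Sigma^1_1(A)$, in fact arithmetical with an existential set quantifier coming from the fusion. Nonemptiness follows from the construction. The main obstacle is Step 1 combined with certification: the uniform index for enumeration does not directly give a computation. The crucial point is to find Ramsey colorings that are \emph{arithmetical} and whose homogeneous sets make the complement of $A$ decidable uniformly. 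I would first try coloring $k$-element sets $F$ by whether $W_e^F$ already contains each $m<\min F$.
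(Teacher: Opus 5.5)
\textbf{The proposal has a genuine gap at the decisive step: recovering the selected set $R$ itself, not just $A$.}

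\emph{The opening reduction is circular.} Reducing to \cite[Theorem 1.4]{GHPT} needs an infinite $B'\subseteq A$ with $B'\uie B'$, and that is exactly the second half of Question~1.7. The uniformization $A\uie B$ recovers the fixed target $A$ from subsets of $B$, never $B$ itself (cf.\ Remark~\ref{rem:bridge-scope}). ``Restricting $B$ to a set that can be enumerated from $A$'' presupposes an infinite $B'\subseteq B$ with $B'\leq_e A$, and you give no argument for one.

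\emph{Your $\Theta$ makes the same assumption in a stronger form.} If membership of $x$ in $R$ is fixed by $A\res x$ via a fixed rule, then $R\leq_T A\leq_T R$. You would then be producing a uniformly introreducible $R\subseteq A$ with $R\equiv_T A$, in particular $R\in\Delta^1_1(A)$. That is precisely the strengthening left open in Section~\ref{sec:conclusions}. Ramsey homogeneity does not supply such a rule. Homogeneous sets for $A$-computable colorings need not be $A$-computable, already for pairs. Also, the set carrying the uniform decoder for $A$ is only $\Delta^1_1(B)$ for an ordinal-controlled $B$.

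\emph{The missing idea is the color-profile reconstruction of Theorem~\ref{thm:profile-basis}.}
\begin{enumerate}
\item Fix a coherent profile $\tau$ of eventual constant colors $c_E$.
\item Build $R$ canonically from $A\oplus\tau$ by a least-candidate search.
\item Show that for every $D\in[R]^\omega$, the oracle $A\oplus D$ recovers $\tau$ on all finite inputs, because extending $E$ by elements of $D$ until the front is reached preserves $\tau$. Hence $A\oplus D$ regenerates $R$.
\item Substitute $\Psi^D$ for $A$ to remove the extra oracle.
\end{enumerate}
Here $R$ need not be computable from $A$. The class $\KK$ is $\Sigma^1_1(A)$ because of the existential quantifier over $(\tau,G,r)$, not because finitely many homogeneity conditions imply the clause over all $D\in[R]^\omega$.

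\textbf{The certification step for $\Psi$ also fails as proposed.}

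\emph{Negative answers cannot be certified.} With only a positive operator for $A$, declaring $m\notin A$ requires the decoder to know when it has read enough of $D$. Your sketch gives no such bound that is computable without $A$. The paper avoids this in two moves. It first obtains a genuine reduction $A\ui C$ from \cite[Proposition 5.3]{GHPT}, a settling-time and sparseness argument that uses introenumerability of the target. It then compiles this into an operator $\Gamma$ for $\Graph(A)$. The decoder halts at the first answer, and monotonicity makes every answer seen before the certificate correct.

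\emph{Colorings of $k$-element sets cannot force the good side.} A bad homogeneous $K$ only yields $T(\FF|K)\subseteq T_n(C)$. This is contradictory only if the front's rank exceeds $\rk(T_n(C))$. Lemma~\ref{lem:rank} bounds these omission ranks only below $\omega_1^C$, and they can be transfinite, whereas $[K]^k$ has rank $k$.

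Your coloring idea is close to the leaf coloring \eqref{eq:leaf-color}, but it must satisfy three conditions:
\begin{itemize}
\item it must be applied to an operator for $\Graph(A)$;
\item it must live on an $A$-decidable front whose rank exceeds the aggregate omission rank $\rho$ on every infinite restriction (Lemmas~\ref{lem:rank} and~\ref{lem:high-front});
\item it must use the clopen Nash--Williams theorem rather than Ramsey for pairs and triples.
\end{itemize}
Such a front exists only because $\rho<\omega_1^C\leq\omega_1^A$. So the ordinal control of \cite[Corollary 3.13]{GHPT}, which your Galvin--Prikry Step~1 discards, is essential.

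\textbf{A smaller repair in Step 1.} Your fusion must homogenize the classes $\{D:W_e^{s\cup D}=A\}$ for all finite stems $s$, because $W_e^D=A$ is not invariant under finite changes of $D$.
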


The affirmative answers to the two parts of Question 1.7 are stated
separately in Corollaries~\ref{cor:first} and~\ref{cor:second} below.
We do not assume that the final subset is hyperarithmetic in $A$
or has the same Turing degree as $A$.
A single decoder index works for all infinite subsets of a given $R$,
and indeed for every $R$ in the fixed class. We do not claim that the
indices can be chosen computably from an arbitrary introenumerable
set $A$.

For an infinite set $D$, let $p_D$ be its increasing enumeration,
indexed starting at zero. We identify finite sets with their
increasing enumerations and write $E\preceq D$ for the initial-segment
relation. We set $\max\varnothing=-1$ and
$D_{>x}=D\cap(x,\infty)$.
We write $\HYP(Z)=\Delta^1_1(Z)$ for the sets hyperarithmetic in $Z$,
$\omega_1^Z$ for the least ordinal without a $Z$-computable
presentation, and $\OO^Z$ for the hyperjump of $Z$.

For a nonempty well-founded tree $T\subseteq\N^{<\omega}$,
we define the rank of each node $\sigma\in T$ by
\[
  \rk_T(\sigma)=
  \sup\{\rk_T(\sigma{}^\frown\langle x\rangle)+1:
  x\in\N,\ \sigma{}^\frown\langle x\rangle\in T\},
  \qquad
  \rk(T)=\rk_T(\varnothing).
\]
Here $\sup\varnothing=0$, so a leaf has rank zero.
The rank of a rooted subtree is at most that of the containing tree.

\medskip
\paragraph{\textit{Proof strategy and organization.}}
The distinction between a fixed target and a selected subset is
central to the argument. The uniformization theorem of GHPT
\cite[Theorem 1.2]{GHPT} turns $A\intr B$ into $A\ui C$ on some
infinite $C\subseteq B$. When $B=A$, this still computes $A$, not
$C$. If in addition $C\in\Delta^1_1(A)$, a further refinement gives
a uniformly introreducible subset \cite[Lemma 2.2]{GHPT}.
Our local criterion, Theorem~\ref{thm:local}, requires only
$\omega_1^C\leq\omega_1^A$ in addition to $A\ui C$ and
$C\subseteq A$; its output is a subset of $A$, not necessarily of $C$.

Section~\ref{sec:uniformization} combines the uniformization and
refinement results of GHPT
\cite[Proposition 2.1, Corollary 3.13, and Proposition 5.3]{GHPT}
with preservation of computable ordinals under hyperarithmetic
reducibility, by Spector's theorem \cite{Spector};
see also \cite[Chapter~5]{Montalban}.

For an infinite introenumerable $A$, it gives such a set $C$ and
an ordinary positive operator that enumerates $\Graph(A)$ from
every infinite subset of $C$. Failures to enumerate an answer at
a given input are represented by omission trees. Combining these
in a single well-founded tree gives one rank bound below
$\omega_1^A$ for all inputs simultaneously.

Sections~\ref{sec:fronts} and~\ref{sec:nucleus} convert this bound
into finite certificates. We construct an $A$-decidable front whose
rank exceeds the omission bound on every infinite restriction.
A binary labeling tests whether a finite approximation is correct
and supplies answers on a specified finite interval.
The clopen consequence of the Nash--Williams theorem
\cite[Theorem 1.2]{MV}, together with the rank comparison, forces a
nonempty homogeneous side on which these certificates are available.
A decoder reads finite approximations until an answer appears;
correctness is certified without requiring the decoder to recognize
the front or consult $A$.

Section~\ref{sec:profiles} addresses reconstruction of the selected
subset. For a nonempty homogeneous side of a clopen coloring,
relative to an oracle $Z$, we construct coherent color profiles
on finite sets. A profile determines a canonical homogeneous set
$R$, and $Z$ together with any infinite $D\subseteq R$ recovers
the whole profile. Repeating the canonical construction then
recovers $R$. Arithmetical conditions on profiles and their witnesses
yield a nonempty $\Sigma^1_1(Z)$ class of outputs with one common
decoder.

In Section~\ref{sec:conclusion}, we apply this relative construction
with $Z=A$. The decoder for $A$ obtained from finite certificates
supplies the oracle needed to reconstruct $R$, giving an ordinary
decoder with oracle $D$ alone. This proves the local criterion and
Theorem~\ref{thm:main}. The two parts of Question 1.7 follow as
Corollaries~\ref{cor:first} and~\ref{cor:second}.
Separate applications of the Gandy basis theorem and the relativized
hyperjump basis theorem \cite[Corollary 2.7]{JS} give
Corollaries~\ref{cor:ordinal} and~\ref{cor:hyperjump}.
Section~\ref{sec:conclusions} concludes with open questions about
the complexity of the selected subset and finite decomposability.

\section{Uniformization for introenumerable sets and a single omission tree}
\label{sec:uniformization}

This section supplies the initial uniform computation of $A$ and
an ordinal bound on its possible failures at finite stages.
The intermediate set $C$ is not yet required to compute itself
uniformly from its infinite subsets. What matters here is that
$A\ui C$ and that the associated omission rank is below
$\omega_1^A$.
We first separate the source results for relative enumeration,
positive enumeration, and computation of a characteristic function.

\medskip
\paragraph{\textit{Uniform enumeration without ordinal increase.}}
By \cite[Corollary 3.13]{GHPT}, for infinite sets $X,B$,
\begin{equation}\label{eq:ghpt-uniformization}
  X\ice B\ \Longrightarrow\
  \exists B_0\in[B]^\omega\,
  \bigl(X\uice B_0\ \land\ \omega_1^{B_0}\leq\omega_1^B\bigr).
\end{equation}

\medskip
\paragraph{\textit{Positive compilation.}}
For infinite sets $X,B$, \cite[Proposition 2.1]{GHPT} gives
\begin{equation}\label{eq:positive}
  X\uice B\quad\Longleftrightarrow\quad X\uie B.
\end{equation}
The conversion is an effective transformation of ordinary operator
indices. It neither adds an oracle nor changes the underlying set.

\medskip
\paragraph{\textit{Improvement for an introenumerable target.}}
By \cite[Proposition 5.3]{GHPT}, if $B$ is infinite, then
\begin{equation}\label{eq:ghpt-improvement}
  \left.
  \begin{array}{l}
    X\text{ is infinite and introenumerable},\\
    X\uie B
  \end{array}
  \right\}
  \Longrightarrow
  \exists C\in[B]^\omega\cap\Delta^1_1(B)\ (X\ui C).
\end{equation}
The introenumerability hypothesis concerns the target $X$, not the
underlying set $B$. The conclusion is an ordinary uniform Turing
reduction. The complexity bound is relative to $B$, not to $X$.

\begin{lemma}[Hyperarithmetic reducibility and computable ordinals]
\label{lem:ordinal}
If $X\in\Delta^1_1(Y)$, then
\[
 \omega_1^X\leq\omega_1^Y,
 \qquad
 \omega_1^{Y\oplus X}=\omega_1^Y.
\]
\end{lemma}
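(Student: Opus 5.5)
The plan is to derive both claims from Spector's theorem together with the fact that hyperarithmetic reducibility is monotone on computable ordinals. First reduce to a single inequality: since $X\in\Delta^1_1(Y)$ and $X\leq_T Y\oplus X$, we have $Y\oplus X\in\Delta^1_1(Y)$. Indeed, $\Delta^1_1(Y)$ is closed under joins with $Y$ itself, as the $\Sigma^1_1(Y)$ and $\Pi^1_1(Y)$ definitions of $X$ yield such definitions of $Y\oplus X$. So it suffices to show
\[
  Z\in\Delta^1_1(Y)\ \Longrightarrow\ \omega_1^Z\leq\omega_1^{Y}
\]
for arbitrary $Z$. This gives $\omega_1^X\leq\omega_1^Y$ directly, and applied to $Z=Y\oplus X$ it gives $\omega_1^{Y\oplus X}\leq\omega_1^Y$. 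The reverse inequality $\omega_1^Y\leq\omega_1^{Y\oplus X}$ is immediate, because $Y\leq_T Y\oplus X$ and every $Y$-computable well-ordering is $(Y\oplus X)$-computable.

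For the key inequality, I would use Kleene's characterization $\Delta^1_1(Y)=\HYP(Y)$: there is some $a\in\OO^Y$ with $Z\leq_T H^Y_a$. Next, every $Z$-computable well-ordering is then $H^Y_a$-computable. Hence it suffices to show $\omega_1^{H^Y_a}=\omega_1^Y$ for $a\in\OO^Y$. This is the relativized form of Spector's result that the hyperjump is not hyperarithmetic, equivalently that $\omega_1^{H_a}=\omega_1^{\mathrm{CK}}$. One route goes as follows. The set of indices $e$ such that $\Phi_e^{H_a^Y}$ is a well-ordering is $\Pi^1_1(Y)$, since $H_a^Y$ is $\Delta^1_1(Y)$. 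If $\omega_1^{H^Y_a}>\omega_1^Y$, a $\Sigma^1_1(Y)$-boundedness argument would then compare ordinals and give a $\Delta^1_1(Y)$ definition of $\OO^Y$. That contradicts the fact that $\OO^Y$ is $\Pi^1_1(Y)$-complete and not $\Delta^1_1(Y)$.

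The main obstacle is this last step, namely that computable ordinals do not increase along the hyperarithmetic hierarchy. Its honest proof is Spector's boundedness argument. Since the paper explicitly cites \cite{Spector} and \cite[Chapter~5]{Montalban} for exactly this preservation, I would quote it in the form ``$Z\in\HYP(Y)\Rightarrow\omega_1^Z\leq\omega_1^Y$'' rather than reprove it. The remaining work is the bookkeeping above: the join closure and the trivial reverse inequality.
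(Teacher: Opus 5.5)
Your proposal is correct and follows essentially the same route as the paper. Both arguments pass to $Y\oplus X\in\Delta^1_1(Y)$, get the reverse inequality from $Y\leq_T Y\oplus X$, and obtain the key bound $\omega_1^{Y\oplus X}\leq\omega_1^Y$ by quoting relativized Spector. The difference is only in packaging: the paper applies Spector directly to a $(Y\oplus X)$-computable, hence $\Delta^1_1(Y)$, well-order, while you go through $Z\leq_T H^Y_a$ and $\omega_1^{H^Y_a}=\omega_1^Y$, which is the same theorem.
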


\begin{proof}
Put $Z=Y\oplus X$. Since $X\in\Delta^1_1(Y)$, closure
under finite joins gives $Z\in\Delta^1_1(Y)$.

Let $\alpha<\omega_1^Z$, and choose a $Z$-computable
presentation of a well-order of order type $\alpha$.
Its code is hyperarithmetic in $Y$, by downward closure
of $\Delta^1_1(Y)$ under Turing reducibility.
By Spector's theorem, relativized to $Y$ \cite{Spector}
(see also \cite[Chapter~5]{Montalban}), $\alpha$ has a
$Y$-computable presentation. Hence $\alpha<\omega_1^Y$.
It follows that $\omega_1^Z\leq\omega_1^Y$.

The reverse inequality follows from $Y\leq_T Z$,
so $\omega_1^{Y\oplus X}=\omega_1^Y$.
Finally, $X\leq_T Z$ gives
$\omega_1^X\leq\omega_1^Z=\omega_1^Y$.
\end{proof}

\medskip
\paragraph{\textit{Why the improvement supplies an ordinary decoder.}}
We recall the mechanism of \cite[Proposition 5.3]{GHPT}; its
substantive input, Proposition 5.2, remains an external result.
Suppose $A$ is introenumerable and $A\uie B$.
By \cite[Proposition 5.2]{GHPT}, choose infinite $X\subseteq A$ and
$Y\subseteq B$ such that
\[
 X,Y\in\Delta^1_1(B),\qquad X\ui Y.
\]
Fix an ordinary functional $\Phi$ witnessing the last reduction.
Since $A$ is introenumerable and $X\subseteq A$ is infinite, choose an
ordinary index $e$ with $W_e^X=A$. This is a choice of one natural-number
index for the fixed set $X$; it does not assert uniformity for arbitrary
subsets of $A$.

Let $(W_{e,s}^X)$ be a finite increasing $X$-computable approximation and put
\[
 g(n)=\min\{s:W_{e,s}^X\cap[0,n]=A\cap[0,n]\}.
\]
This is a settling-time function and $g\leq_T X'$. Indeed, $X'$ decides
membership in $A=W_e^X$, and then finds stages at which the finitely many
positive entries up to $n$ have appeared. Hence $g\in\Delta^1_1(B)$.
Choose a strictly increasing sequence $(c_n)$ in $Y$ with $c_n\geq g(n)$,
computably in $Y\oplus g$, and put $C=\{c_n:n\in\omega\}$.
The increasing enumeration also decides $C$, so $C\in\Delta^1_1(B)$.

For every $D\in[C]^\omega$, the ordinary procedure $\Phi^D$ computes $X$,
and
\[
 p_D(n)\geq p_C(n)=c_n\geq g(n).
\]
An ordinary decoder for $A(n)$ simulates the finite approximation
$W_{e,p_D(n)}^X$ using $\Phi^D$ to answer its oracle queries, and returns
$1$ exactly when $n$ belongs to that approximation. This terminates and
returns $A(n)$. Neither $g$, $X'$, $A$, nor $B$ is supplied as an additional
oracle to the decoder. The sparse choice of $C$ makes $p_D$ supply a
sufficient numerical time bound.

\begin{lemma}[An ordinal-preserving consequence of GHPT]\label{lem:ghpt}
If $A$ is infinite and introenumerable, there exist $C\in[A]^\omega$
and an ordinary positive enumeration operator $\Gamma$ such that
$A\ui C$ and
\begin{equation}\label{eq:uniformizer}
  \omega_1^C=\omega_1^A,
  \qquad
  \Gamma(D)=\Graph(A)\quad(D\in[C]^\omega).
\end{equation}
\end{lemma}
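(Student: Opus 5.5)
The plan is to chain the three cited GHPT results, using Lemma~\ref{lem:ordinal} at each step to keep control of the computable ordinals. Throughout, the target is $X=A$ and the starting base is $B=A$.

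\emph{Uniformization.} Since $A$ is introenumerable, $A\ice A$. Applying \eqref{eq:ghpt-uniformization} with $X=B=A$ gives $B_0\in[A]^\omega$ with $A\uice B_0$ and $\omega_1^{B_0}\leq\omega_1^A$.

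\emph{Positive compilation.} By \eqref{eq:positive}, $A\uie B_0$, so there is an ordinary positive operator $\Gamma_0$ with $\Gamma_0(D)=A$ for every $D\in[B_0]^\omega$.

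\emph{Improvement.} Now apply \eqref{eq:ghpt-improvement} with $X=A$, which is infinite and introenumerable, and $B=B_0$. This gives $C\in[B_0]^\omega\cap\Delta^1_1(B_0)$ with $A\ui C$. Clearly $C\subseteq B_0\subseteq A$, so $C\in[A]^\omega$. By Lemma~\ref{lem:ordinal}, $\omega_1^C\leq\omega_1^{B_0}\leq\omega_1^A$. For the reverse inequality, $A\ui C$ gives in particular $A\leq_T C$, hence $\omega_1^A\leq\omega_1^C$. Together these give $\omega_1^C=\omega_1^A$.

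\emph{The operator $\Gamma$.} Fix an index $e$ with $\Phi_e^D=A$ for all $D\in[C]^\omega$. Let $\Gamma$ be the positive enumeration operator that enumerates $(n,i)$ whenever there is a finite string $\sigma$ such that:
\begin{itemize}
\item $\sigma$ is the characteristic string, on $[0,|\sigma|)$, of some finite set $F\subseteq D$;
\item $\Phi_e^\sigma(n)\downarrow=i$.
\end{itemize}
Equivalently, $\Gamma$ enumerates from pairs consisting of a finite set $F$ and a bound $m$ with $F\subseteq D$.

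The difficulty is that this form only uses positive information about $D$. The negative information "$D\cap[0,m)=F$" is not available to a positive operator. So I will not use this naive operator. Instead I use the standard conversion from a uniform Turing reduction to a uniform enumeration.

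Note first that $A\ui C$ implies $\Graph(A)\uice C$. The graph of a characteristic function is computable from $A$, uniformly, hence c.e. in every $D\in[C]^\omega$ via one relative c.e. operator, namely "run $\Phi_e^D(n)$ and enumerate $(n,\Phi_e^D(n))$". Then \eqref{eq:positive}, applied with $X=\Graph(A)$ and $B=C$ (both infinite), yields an ordinary positive operator $\Gamma$ with $\Gamma(D)=\Graph(A)$ for all $D\in[C]^\omega$.

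\emph{Main obstacle.} The only delicate point is this last conversion. In GHPT's proof the negative oracle information is dispensed with because every finite subset of an infinite subset extends to another infinite subset of $C$ on which the same uniform reduction must still be correct. Citing \eqref{eq:positive} covers this.
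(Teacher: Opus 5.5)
Your proposal is correct and matches the paper's own proof step for step: \eqref{eq:ghpt-uniformization}, then \eqref{eq:positive}, then \eqref{eq:ghpt-improvement}; next Lemma~\ref{lem:ordinal} and $A\leq_T C$ give $\omega_1^C=\omega_1^A$, and a second application of \eqref{eq:positive} to $\Graph(A)\uice C$ yields $\Gamma$. One side remark: the ``naive'' operator with axioms $F_\sigma\mapsto(n,i)$ that you discarded is in fact correct on $[C]^\omega$, for exactly the extension reason you give in your final paragraph, so rejecting it was unnecessary but harmless.
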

\begin{proof}
Since $A\ice A$, \eqref{eq:ghpt-uniformization} gives an infinite
$B\subseteq A$ such that
\[
  A\uice B,\qquad \omega_1^B\leq\omega_1^A.
\]
By \eqref{eq:positive}, $A\uie B$.
Since the target $A$ is introenumerable,
\eqref{eq:ghpt-improvement} now gives
\[
  C\in[B]^\omega\cap\Delta^1_1(B),\qquad A\ui C.
\]
Lemma~\ref{lem:ordinal} yields
\[
  \omega_1^C\leq\omega_1^B\leq\omega_1^A.
\]
Taking $D=C$ in $A\ui C$, we obtain $A\leq_T C$ and hence
$\omega_1^A\leq\omega_1^C$.
Thus $\omega_1^C=\omega_1^A$, and $C\subseteq B\subseteq A$.

Finally, fix an ordinary Turing functional $\Psi$ witnessing
$A\ui C$. The ordinary relative c.e. operator which, with
oracle $D$, enumerates $(n,b)$ whenever
$\Psi^D(n)\downarrow=b\in\{0,1\}$ witnesses
\[
  \Graph(A)\uice C.
\]
Since both $\Graph(A)$ and $C$ are infinite,
\cite[Proposition 2.1]{GHPT} yields an ordinary positive
enumeration operator $\Gamma$ such that
\[
  \Gamma(D)=\Graph(A)\qquad(D\in[C]^\omega).
\]
Together with $\omega_1^C=\omega_1^A$, this proves
\eqref{eq:uniformizer}.
\end{proof}

\begin{remark}\label{rem:bridge-scope}
The first use of positive compilation enumerates $A$, not
$\Graph(A)$.
The passage to the graph occurs only after
\cite[Proposition 5.3]{GHPT} has provided an ordinary total
Turing decoder for $A$.
Also, $C\in\Delta^1_1(B)$ does not imply $C\in\Delta^1_1(A)$:
the first uniformizing set $B$ was not asserted to be hyperarithmetic
in $A$. Only its ordinal was controlled.
The lemma alone does not show that $C$ is uniformly introreducible:
the target of $A\ui C$ is $A$, not $C$.
\end{remark}

\medskip
\paragraph{\textit{The introreducible special case.}}
When $A$ is introreducible, the uniformizing set can instead be
obtained directly from \cite[Corollary 3.13]{GHPT} with target
$\Graph(A)$ and underlying set $A$.
Indeed, every infinite subset of $A$ computes $A$ and hence
enumerates its graph. The corollary and positive compilation give
$C\in[A]^\omega$ and an ordinary $\Gamma$ with
$\omega_1^C\leq\omega_1^A$ and
$\Gamma(D)=\Graph(A)$ for every $D\in[C]^\omega$.
Enumerating $\Gamma(C)$ computes $A$, so the reverse ordinal
inequality follows.
This direct route does not use the introenumerable-target
improvement in \cite[Proposition 5.3]{GHPT}.

We recall explicitly why the positive compilation is safe.
If a computation on a finite binary string $\sigma$ outputs $(n,b)$,
use the finite set
$F_\sigma=\{u<|\sigma|:\sigma(u)=1\}$ as its support.
When $F_\sigma\subseteq C$, the infinite oracle
\[
F_\sigma\cup\bigl(C\setminus[0,|\sigma|)\bigr)
\]
is a subset of $C$ and extends $\sigma$. Correctness on all infinite
subsets forces $b=A(n)$. Thus discarding negative oracle answers
introduces no incorrect outputs on supports contained in $C$.
The computations on each infinite $D\subseteq C$ ensure that the
compiled operator has enough axioms to enumerate the entire graph
from that same $D$.

For the graph-enumeration operators used below, fix finite
approximations $\Gamma_s$ obtained from the first $s$
stages of the enumeration of axioms, with $\Gamma_0(F)=\varnothing$.
For finite $F$, the set $\Gamma_s(F)$ is finite and computable, and
\begin{equation}\label{eq:monotonicity}
s\leq t,\ F\subseteq E
\quad\Longrightarrow\quad
\Gamma_s(F)\subseteq\Gamma_t(E).
\end{equation}
We may restrict outputs to $\N\times\{0,1\}$.

For every set $D\subseteq\N$, these approximations satisfy
\[
  \Gamma(D)=
  \bigcup_{s\in\N}\;
  \bigcup_{F\in[D]^{<\omega}}\Gamma_s(F).
\]

For each input, an omission tree records finite subsets on which
neither possible answer has yet appeared. We place all these trees
under a common root so that their ranks have a single bound.
This bound, rather than the individual component ranks, will be
compared with the rank of a front in Section~\ref{sec:nucleus}.

\begin{lemma}[The aggregate tree]\label{lem:rank}
Suppose that $C$ is infinite and that $\Gamma$ is an ordinary
positive enumeration operator such that
$\Gamma(D)=\Graph(A)$ for every $D\in[C]^\omega$.
For $n\in\N$, let
\begin{equation}\label{eq:omission}
T_n(C)=\left\{\tau:\begin{array}{l}
\tau\text{ is strictly increasing},\ \rng(\tau)\subseteq C,\\
(n,0),(n,1)\notin\Gamma_{|\tau|}(\rng(\tau))
\end{array}\right\}.
\end{equation}
The tree
\begin{equation}\label{eq:aggregate}
\TT_\Gamma(C)=\{\varnothing\}\cup
\{\langle n\rangle{}^\frown\tau:n\in\N,\ \tau\in T_n(C)\}
\end{equation}
is $C$-computable and well-founded. Consequently,
\begin{equation}\label{eq:rank-bound}
\rho:=\rk(\TT_\Gamma(C))<\omega_1^C,
\qquad \rk(T_n(C))<\rho\quad(n\in\N).
\end{equation}
\end{lemma}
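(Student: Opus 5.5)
The argument has three parts: computability of the trees, well-foundedness, and the rank bounds.

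\emph{Computability.} Each $T_n(C)$ is a set of finite sequences, and membership of $\tau$ is decidable from $C$. One checks that $\tau$ is strictly increasing, queries $C$ at the finitely many entries of $\tau$, and computes the finite set $\Gamma_{|\tau|}(\rng(\tau))$, which is computable uniformly in $\tau$. This is uniform in $n$, so $\TT_\Gamma(C)$ is $C$-computable. Each $T_n(C)$ is closed under initial segments: if $\tau'\preceq\tau$, then $|\tau'|\leq|\tau|$ and $\rng(\tau')\subseteq\rng(\tau)$, so \eqref{eq:monotonicity} gives $\Gamma_{|\tau'|}(\rng(\tau'))\subseteq\Gamma_{|\tau|}(\rng(\tau))$, and omission of both pairs passes down. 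Hence $\TT_\Gamma(C)$ is a tree. Note also that $\varnothing\in T_n(C)$ because $\Gamma_0=\varnothing$, so every $\langle n\rangle$ lies in the aggregate tree.

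\emph{Well-foundedness.} Any infinite path through $\TT_\Gamma(C)$ begins with some $\langle n\rangle$ and continues as an infinite path $f$ through $T_n(C)$. Let $D=\rng(f)$. Since $f$ is strictly increasing with values in $C$, $D$ is infinite and $D\in[C]^\omega$. By hypothesis $(n,A(n))\in\Gamma(D)$, so there are a stage $s$ and a finite $F\subseteq D$ with $(n,A(n))\in\Gamma_s(F)$. Choose $k\geq s$ so large that $F\subseteq\rng(f\res k)$. Then \eqref{eq:monotonicity} gives $(n,A(n))\in\Gamma_k(\rng(f\res k))$, contradicting $f\res k\in T_n(C)$. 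So the tree is well-founded.

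\emph{Rank bounds.} A $C$-computable well-founded tree has rank below $\omega_1^C$. This is the standard Kleene–Brouwer argument: the Kleene–Brouwer ordering of a well-founded computable tree is a $C$-computable well-order whose order type is at least the rank. Hence $\rho<\omega_1^C$. For each $n$, the subtree of $\TT_\Gamma(C)$ above $\langle n\rangle$ is isomorphic to $T_n(C)$, so $\rk(T_n(C))=\rk_{\TT_\Gamma(C)}(\langle n\rangle)<\rk_{\TT_\Gamma(C)}(\varnothing)=\rho$ by the definition of rank as a supremum of child ranks plus one. This gives \eqref{eq:rank-bound}.

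The only subtle step is the well-foundedness argument. It relies on the stage index $|\tau|$ growing along a path, so that the path eventually reaches both the stage $s$ and the support $F$. It also needs monotonicity in both the stage and the set, which \eqref{eq:monotonicity} supplies. The ordinal bound is a cited standard fact.
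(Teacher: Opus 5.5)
Your proposal is correct and follows essentially the same route as the paper: the same $C$-decidability and initial-segment closure via monotonicity, the same well-foundedness argument taking a stage $s$ and support $F$ and passing to a long enough initial segment of the path, and the same identification of $T_n(C)$ with the subtree above $\langle n\rangle$ to get $\rk(T_n(C))<\rho$. The only difference is that the paper proves the Kleene--Brouwer comparison $\rk_T(\sigma)\leq h(\sigma)$ by well-founded induction, where you cite it as a standard fact.
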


\begin{proof}
Since $\Gamma_0(\varnothing)=\varnothing$, the empty sequence
belongs to $T_n(C)$ for every $n\in\N$.
If $\sigma\preceq\tau\in T_n(C)$, then
\[
 \Gamma_{|\sigma|}(\rng(\sigma))
 \subseteq
 \Gamma_{|\tau|}(\rng(\tau))
\]
by \eqref{eq:monotonicity}.
Thus $\sigma\in T_n(C)$, so each $T_n(C)$ is a tree.
Its membership relation is decidable in $C$, uniformly in $n$:
all oracle queries concern the finitely many entries of the
given sequence, and the remaining test concerns a finite
computable approximation to $\Gamma$.

Fix $n$ and suppose, towards a contradiction, that
$f\in[T_n(C)]$.
Then $f$ is strictly increasing and
$D=\rng(f)$ is an infinite subset of $C$.
Since $\Gamma(D)=\Graph(A)$, there exist a finite set
$F\subseteq D$ and a stage $s$ such that
\[
 (n,A(n))\in\Gamma_s(F).
\]
Choose $m\geq s$ sufficiently large that
$F\subseteq\rng(f\res m)$.
By monotonicity,
\[
 (n,A(n))\in\Gamma_m(\rng(f\res m)),
\]
contrary to $f\res m\in T_n(C)$.
Hence every $T_n(C)$ is well-founded.

Every nonempty finite sequence has a unique representation
$\langle n\rangle{}^\frown\tau$.
The uniform decidability of the component trees therefore
shows that $\TT_\Gamma(C)$ is a $C$-computable tree.
An infinite path through it would have a first entry $n$
and a tail belonging to $[T_n(C)]$, which is impossible.
Thus $\TT_\Gamma(C)$ is well-founded.

The immediate successors of its root are exactly the nodes
$\langle n\rangle$, for $n\in\N$.
For each $n$, the map
$\tau\mapsto\langle n\rangle{}^\frown\tau$
identifies $T_n(C)$ with the subtree rooted at
$\langle n\rangle$.
Consequently,
\[
 \rho=\rk(\TT_\Gamma(C))
 =\sup_{n\in\N}\bigl(\rk(T_n(C))+1\bigr).
\]
In particular, $\rk(T_n(C))<\rho$ for every $n\in\N$.

Finally, let $T=\TT_\Gamma(C)$.
The Kleene--Brouwer ordering of $T$, with proper extensions
preceding their prefixes, is a $C$-computable well-order.
Write
\[
 \alpha=\ot(T,<_{\mathrm{KB}})<\omega_1^C.
\]
For $\sigma\in T$, let
\[
 h(\sigma)=
 \ot\bigl(\{\tau\in T:\tau<_{\mathrm{KB}}\sigma\},
 <_{\mathrm{KB}}\bigr).
\]
Every immediate successor of $\sigma$ in the tree precedes
$\sigma$ in the Kleene--Brouwer ordering.
Well-founded induction therefore gives
$\rk_T(\sigma)\leq h(\sigma)$ for every $\sigma\in T$.
In particular,
\[
 \rho=\rk_T(\varnothing)
 \leq h(\varnothing)
 <\alpha
 <\omega_1^C.
\]
\end{proof}

\begin{remark}
We do not infer \eqref{eq:rank-bound} merely from the individual
inequalities $\rk(T_n(C))<\omega_1^C$: an uncontrolled supremum
could reach $\omega_1^C$. We use a single $C$-decidable tree that is
well-founded in the ambient universe.
\end{remark}

\section{A front whose rank survives every thinning}\label{sec:fronts}

The later Ramsey arguments will repeatedly replace an infinite set
by a smaller one. We therefore need a family of finite initial
segments whose tree has high rank on every infinite restriction,
not merely on its original domain. The following construction
obtains this persistence from a single presentation of a well-order.

\begin{definition}
A front on an infinite set $V$ is a family
$\FF\subseteq[V]^{<\omega}\setminus\{\varnothing\}$ with no two
distinct members comparable under the initial-segment relation,
such that every $D\in[V]^\omega$ has an initial segment in $\FF$.
For $H\in[V]^\omega$, let
\[
\FF|H=\{F\in\FF:F\subseteq H\},
\qquad
T(\FF|H)=\{E:\exists F\in\FF|H\ (E\preceq F)\}.
\]
\end{definition}

Let $\ell:\FF\to q$ be a labeling, where $1\leq q<\omega$, and let
$c(D)=\ell(F_D)$, where $F_D$ is the unique initial segment of $D$
in $\FF$.
For every $H\in[V]^\omega$ and every $i<q$,
\[
 (\forall D\in[H]^\omega)\ c(D)=i
 \quad\Longleftrightarrow\quad
 (\forall F\in\FF|H)\ \ell(F)=i.
\]
Indeed, every $F\in\FF|H$ is the initial segment in the front of
the infinite set $F\cup H_{>\max F}$.
In particular, each homogeneous side $\HH_i$ determined by these
equivalent conditions is closed under taking infinite subsets.

The initial segment in the front is unique. The tree $T(\FF|H)$ is
well-founded: a path through it would be an infinite subset of $H$
that continues strictly beyond its own initial segment in the front.
We equip $[V]^\omega$ with the topology inherited from
$2^\N$ by identifying sets with their characteristic functions.
Every finite labeling of a front induces a clopen coloring of
$[V]^\omega$. We use the consequence of the Nash--Williams theorem
that every clopen coloring with finitely many colors has an infinite
homogeneous set, even after restriction to an infinite set $H$
\cite[Theorem 1.2]{MV}. The finite-color version follows by iterating
the binary version.

\begin{lemma}[High-rank fronts]\label{lem:high-front}
Let $V$ be an infinite $Z$-computable set, and let $W$ be a
$Z$-computable well-order of order type $\beta$. There is a front
$\FF$ on $V$, decidable in $Z$, such that
\begin{equation}\label{eq:high-front}
\forall H\in[V]^\omega\qquad \rk(T(\FF|H))\geq\beta.
\end{equation}
\end{lemma}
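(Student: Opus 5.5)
The plan is to build $\FF$ as a ``countdown'' along $W$ whose next value is always chosen greedily from an initial segment of $\N$. Greedy choice makes the construction robust under thinning: any infinite $H$ contains arbitrarily large numbers, and these still allow a jump to any prescribed $W$-predecessor. Without loss of generality the field of $W$ is a $Z$-computable subset of $\N$. Write $\ot(a)$ for the order type of $\{u:u<_W a\}$, and adjoin a formal top element $\infty$ with $\ot(\infty)=\beta$.

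For a finite increasing sequence $E=(x_0<\dots<x_k)$ from $V$, define labels $a_{-1}=\infty$ and, recursively,
\[
a_i=\text{the $W$-largest } u\le x_i \text{ with } u<_W a_{i-1},
\]
which is undefined if there is no such $u$. The label $a_i$ is the $W$-maximum of a finite set, so it is computable from $Z$ uniformly in $E$. The front $\FF$ consists of those nonempty $E$ such that $a_0,\dots,a_{k-1}$ are defined and $a_k$ is undefined. This family is $Z$-decidable. No two distinct members are initial-segment comparable, since the first undefined label occurs at the last entry. Every $D\in[V]^\omega$ has an initial segment in $\FF$: as long as the labels along $D$ are defined they are strictly $W$-decreasing, so by well-foundedness some label is undefined, and the shortest initial segment of $D$ at which this happens lies in $\FF$. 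The empty set is excluded, as required.

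The key step is the rank estimate. Fix $H\in[V]^\omega$. First observe that every $E\subseteq H$ whose labels are all defined lies in $T(\FF|H)$, because extending $E$ by successive elements of $H$ must reach a member of $\FF$ by the previous paragraph. Let $a(E)$ denote the last label of such an $E$, with $a(\varnothing)=\infty$. I claim, by induction on $\gamma$, that $\ot(a(E))>\gamma$ implies $\rk_{T(\FF|H)}(E)\ge\gamma$ for every such $E$; applying this to every $\gamma<\ot(a(E))$ gives $\rk_{T(\FF|H)}(E)\ge\ot(a(E))$.

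To prove the claim, take $\delta<\gamma$ and choose $u<_W a(E)$ with $\ot(u)=\delta$. Pick $y\in H$ with $y>\max E$ and $y\ge u$, which is possible because $H$ is infinite. The new label $w$ is the $W$-largest element $\le y$ below $a(E)$; since $u$ is a candidate, $w\ge_W u$. Hence $\ot(w)\ge\delta$, and the child $E{}^\frown\langle y\rangle$ lies in the tree with $\rk\ge\delta$ by induction (the case $\ot(w)>\delta$ gives it directly; if $\ot(w)=\delta$ we use $\rk\ge\ot(w)$, obtained from the induction hypothesis at all smaller ordinals). Thus $\rk_{T(\FF|H)}(E)\ge\delta+1$ for every $\delta<\gamma$, so $\rk_{T(\FF|H)}(E)\ge\gamma$. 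At the root this yields $\rk(T(\FF|H))\ge\beta$, which is \eqref{eq:high-front}.

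I expect the main obstacle to be the bookkeeping in this induction, in particular the limit case and the fact that the greedy label may overshoot $u$. This is harmless because ranks only increase with $\ot$, but the induction must be phrased as a statement about all nodes whose last label has large order type, rather than about specific labels. It is also necessary to verify that thinning never removes the witnessing child. This is exactly where greediness is used: the label depends only on the threshold $y$, and $H$ contains arbitrarily large $y$.
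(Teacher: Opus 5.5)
Your construction is the paper's: your labels are its states, with $a_i=d(a_{i-1},x_i)$ and ``undefined'' meaning $P(a_{i-1},x_i)=\varnothing$. Your checks of decidability, incomparability and the front property are fine. The rank induction, however, is off by one at successor ordinals. From the claim ``$\ot(a(E))>\gamma\Rightarrow\rk(E)\geq\gamma$'' you only get $\rk(E)\geq\gamma$ for every $\gamma<\ot(a(E))$, that is, $\rk(E)\geq\sup_{\gamma<\ot(a(E))}\gamma$. This falls one short when $\ot(a(E))$ is a successor. In particular, at the root you obtain only $\rk(T(\FF|H))\geq\beta_0$ when $\beta=\beta_0+1$, so the lemma is not reached. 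The same slip occurs inside your inductive step. When $\ot(w)=\delta$ with $\delta$ a successor, the hypothesis at the ordinals below $\delta$ gives only $\rk(E{}^\frown\langle y\rangle)\geq\delta-1$, not $\geq\delta$. So the assertion ``$\rk\geq\ot(w)$, obtained from the induction hypothesis at all smaller ordinals'' is unjustified as written.

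The repair is to induct on the statement ``$\ot(a(E))\geq\gamma\Rightarrow\rk_{T(\FF|H)}(E)\geq\gamma$'' for all nodes $E$ with defined labels. Fix $\delta<\gamma\leq\ot(a(E))$. Your choice of $u<_W a(E)$ with $\ot(u)=\delta$, together with $y\in H$ above $\max E$ and $u$, gives a child whose label $w$ satisfies $\ot(w)\geq\delta$. The hypothesis at $\delta$ then gives that child rank at least $\delta$, hence $\rk(E)\geq\delta+1$. Taking the supremum over $\delta<\gamma$ gives $\rk(E)\geq\gamma$, with no case split needed. Taking $\gamma=\beta$ at the root gives the lemma.

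This is exactly the paper's induction $\rk(T_{a,H})\geq\eta(a)$. The paper organizes it by state $a$, simultaneously for all $H$, using the section identity that identifies the subtree below $\langle x\rangle$ with $T_{d(a,x),H_{>x}}$. Your node-wise version inside a fixed $H$ avoids that identity, and is otherwise equivalent once the hypothesis is stated with $\geq$.
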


\begin{proof}
Adjoin a new greatest element $\top$ to $W$, obtaining
a well-order $L$.
Using a tagged coding, we may assume that the domain of $L$
is a $Z$-decidable subset of $\N$ and that $<_L$ is
decidable in $Z$.

For $a\in L$ and $x\in\N$, let
\[
 P(a,x)=\{b\in L:b\leq x\text{ in the natural order
 and }b<_La\}.
\]
This finite set can be computed in $Z$ by examining only
the natural-number codes at most $x$.
If $P(a,x)$ is nonempty, let $d(a,x)$ be its greatest
element in $<_L$.

For each initial state $a\in L$, consider the following
procedure on increasing sequences from $V$.
Start in state $a$.
When the current state is $u$ and the next element read is $x$,
stop if $P(u,x)$ is empty; otherwise move to state $d(u,x)$
and continue.
Let $\FF_a$ consist of the nonempty finite increasing
sequences from $V$ on which the first stop occurs precisely
at the last element.

Membership in $\FF_a$ is decidable in $Z$, uniformly in $a$.
Indeed, first check that the input is a nonempty increasing
sequence from $V$, and then simulate the procedure for the
length of that sequence.
Accept exactly when the first stop occurs at its final entry.
No search beyond the given finite sequence is required.

No member of $\FF_a$ properly extends another.
Moreover, an infinite increasing sequence from $V$
on which the procedure never stops would produce an infinite
strictly descending sequence of states in $L$.
Since $L$ is well-ordered, every such infinite sequence has
an initial segment in $\FF_a$.
Thus $\FF_a$ is a front on $V$.

For $a\in L$ and $H\in[V]^\omega$, write
\[
 T_{a,H}=T(\FF_a|H).
\]
For every $x\in H$, the infinite set
$\{x\}\cup H_{>x}$ has an initial segment in $\FF_a|H$
beginning with $x$. Hence $\langle x\rangle\in T_{a,H}$.
If $P(a,x)=\varnothing$, then $\langle x\rangle$
belongs to $\FF_a|H$ and is a leaf of $T_{a,H}$.
If $x\in H$ and $P(a,x)\neq\varnothing$, then after reading
$x$ the procedure is in state $d(a,x)$.
Consequently,
\[
 \{\tau:\langle x\rangle{}^\frown\tau\in T_{a,H}\}
 =
 T_{d(a,x),H_{>x}}.
\]
Indeed, the corresponding identity holds for the front
members, and taking initial segments gives the displayed
identity.
In particular,
\[
 \rk_{T_{a,H}}(\langle x\rangle)
 =
 \rk(T_{d(a,x),H_{>x}}).
\]

Let
\[
 \eta(a)=\ot(\{b\in L:b<_La\},<_L).
\]
By well-founded induction on $a$ in $L$, simultaneously
for every $H\in[V]^\omega$, we prove
\begin{equation}\label{eq:state-rank}
 \rk(T_{a,H})\geq\eta(a).
\end{equation}
If $a$ is the least element of $L$, then
$\FF_a|H=[H]^1$, so
$\rk(T_{a,H})=1\geq0=\eta(a)$.

Now assume the claim holds for all states below $a$,
and fix an arbitrary $H\in[V]^\omega$.
For each $b<_La$, choose $x\in H$ whose natural-number
value is at least the code of $b$.
Such an $x$ exists because $H$ is infinite and therefore
unbounded in $\N$.
Then $b\in P(a,x)$, so
\[
 b\leq_L d(a,x)<_La.
\]
Since $H_{>x}$ is infinite, the induction hypothesis
and the section identity above give
\[
\begin{aligned}
 \rk(T_{a,H})
 &\geq \rk_{T_{a,H}}(\langle x\rangle)+1\\
 &= \rk(T_{d(a,x),H_{>x}})+1\\
 &\geq \eta(d(a,x))+1\\
 &\geq \eta(b)+1.
\end{aligned}
\]
Taking the supremum over all $b<_La$, we obtain
\[
 \rk(T_{a,H})
 \geq
 \sup_{b<_La}\bigl(\eta(b)+1\bigr)
 =
 \eta(a).
\]
This completes the induction.

Finally, $\eta(\top)=\beta$.
Taking $\FF=\FF_\top$, we obtain a $Z$-decidable front
on $V$ satisfying \eqref{eq:high-front} for every
$H\in[V]^\omega$.
\end{proof}

\begin{remark}
The required conclusion is a lower bound, not an exact computation
of the rank. The bound holds on every infinite $H$, including sets
not computable in $Z$. We assume neither an effective system of
fundamental sequences nor a list of all well-orders.
\end{remark}

\section{From the omission rank to a clopen nucleus with an ordinary decoder}\label{sec:nucleus}

We now use the rank bound to obtain finite certificates for computing
$A$. The certificates may be recognized using $A$, but the decoder
itself must use only its given infinite subset. The proposition
separates these roles: a rank comparison proves that an appropriate
homogeneous side is nonempty, while monotonicity makes every answer
seen before the relevant certificate correct. This also permits
later constructions to leave $C$ while retaining a decoder for $A$.

\begin{proposition}[Clopen certification]\label{prop:nucleus}
Let $A$ be infinite. Suppose there exist $C\in[A]^\omega$ and an
ordinary positive operator $\Gamma$ such that
\[
\Gamma(D)=\Graph(A)\quad(D\in[C]^\omega),
\qquad \rk(\TT_\Gamma(C))<\omega_1^A.
\]
There exist a front $\GG$ on $A$, decidable in $A$, and an
$A$-computable binary labeling $\ell:\GG\to\{0,1\}$ such that
the homogeneous side
\begin{equation}\label{eq:good-side}
\HH_1=\{R\in[A]^\omega:\ell(F)=1\text{ for every }F\in\GG|R\}
\end{equation}
is nonempty. Moreover, there is an ordinary functional $\Psi$,
which does not consult $A$, $C$, or the front, such that
\begin{equation}\label{eq:ordinary-side}
\Psi^D=A\qquad(D\in\HH_1).
\end{equation}
\end{proposition}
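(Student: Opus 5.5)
The plan is to take the front $\GG$ of Lemma~\ref{lem:high-front} with $Z=V=A$ and a large $A$-computable ordinal $\beta$, label a front member $1$ exactly when its own finite approximation of $\Gamma$ is correct and complete up to its minimum, and then use the Nash--Williams consequence \cite[Theorem 1.2]{MV} together with a rank comparison against $\TT_\Gamma(C)$ to show that $\HH_1$ is nonempty. Put $\rho=\rk(\TT_\Gamma(C))<\omega_1^A$. I would fix an $A$-computable well-order of type $\beta=\omega^{\rho+1}$, which exists because $\omega_1^A$ is closed under ordinal exponentiation. This value is chosen with room to spare for the union estimate in the last step.

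\textbf{Front, labeling and decoder.} For $F\in\GG$ let $\ell(F)=1$ iff two conditions hold. First, $\Gamma_{|F|}(F)$ contains no wrong pair $(n,1-A(n))$. Second, $(n,A(n))\in\Gamma_{|F|}(F)$ for every $n\leq\min F$. This is clearly $A$-computable.

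The decoder $\Psi^D(n)$ does not use $A$, $C$ or $\GG$. It finds the least $x\in D$ with $x\geq n$, and then examines the initial segments $E$ of $D_{\geq x}$ in order of increasing length. It outputs $b$ for the first $E$ with $(n,b)\in\Gamma_{|E|}(E)$.

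To see that $\Psi^D=A$ for $D\in\HH_1$, let $F$ be the front initial segment of $D_{\geq x}$. Then $F\subseteq D$, so $\ell(F)=1$, and $\min F=x\geq n$. Hence a correct pair appears by length $|F|$. Every shorter $E$ examined is a prefix of $F$, and by \eqref{eq:monotonicity} we have $\Gamma_{|E|}(E)\subseteq\Gamma_{|F|}(F)$, which has no wrong pair. Thus the first answer found is $A(n)$.

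\textbf{Nonemptiness of $\HH_1$.} Apply the clopen Ramsey consequence to the labeling restricted to $C$. This gives $R\in[C]^\omega$ lying in $\HH_1$ or in $\HH_0$, and we must exclude $\HH_0$. Suppose every $F\in\GG|R$ has label $0$. Since $F\subseteq C$, the safety argument after Lemma~\ref{lem:ghpt} shows that $\Gamma_{|F|}(F)$ contains no wrong pair. The reasoning is that $F\cup C_{>\max F}$ is an infinite subset of $C$, so $\Gamma$ of it equals $\Graph(A)$, and $\Gamma_{|F|}(F)$ is contained in that graph. Hence label $0$ means some $n_F\leq\min F$ has neither answer in $\Gamma_{|F|}(F)$. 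By monotonicity, every prefix $E$ of $F$ also lacks both answers for $n_F$ in $\Gamma_{|E|}(E)$.

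Fix $x\in R$ and consider the subtree of $T(\GG|R)$ below $\langle x\rangle$. Every node of it lies in $\bigcup_{n\leq x}S_n$, where $S_n$ consists of the nodes $E$ below $\langle x\rangle$ with $(n,0),(n,1)\notin\Gamma_{|E|}(E)$. Each $S_n$ is a subtree of $T_n(C)$, so $\rk(S_n)\leq\rk(T_n(C))<\rho$.

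\textbf{Main obstacle.} The crux is the rank of a finite union of trees. I would prove by induction that $\rk(S\cup S')\leq\rk(S)\oplus\rk(S')$, where $\oplus$ is the natural (Hessenberg) sum. From this, the subtree at $\langle x\rangle$ has rank below $\omega^{\rho}\cdot(x+1)$, and so $\rk(T(\GG|R))\leq\omega^{\rho}\cdot\omega<\omega^{\rho+1}=\beta$. This contradicts \eqref{eq:high-front}.

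If the natural-sum bookkeeping proves awkward, the fallback is a further Ramsey refinement. Colour each $F$ by the least $n_F$ relative to the position of $\min F$ in $R$, so as to fix a single omitted input on a homogeneous set. Then embed directly into one $T_n(C)$ via the common root of $\TT_\Gamma(C)$, where the bound $\rk(T_n(C))<\rho$ of Lemma~\ref{lem:rank} applies.
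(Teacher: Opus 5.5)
Your proof is correct in substance and differs from the paper's in how it excludes the $0$-side, but the last inequality is false as written. Since $\omega^{\rho}\cdot\omega=\omega^{\rho+1}$, the bound $\rk_{T(\GG|R)}(\langle x\rangle)<\omega^\rho\cdot(x+1)$ only gives $\rk(T(\GG|R))\leq\omega^{\rho+1}=\beta$. That does not contradict \eqref{eq:high-front}, so the choice of $\beta$ leaves no room to spare under your stated estimate.

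The fix is immediate, because your estimate is much weaker than what your union lemma gives:
\begin{itemize}
\item Each $\rk(S_n)<\rho$, and $\alpha<\rho$ implies $\alpha\leq\omega^\alpha<\omega^\rho$.
\item The natural sum of finitely many ordinals below $\omega^\rho$ stays below $\omega^\rho$, since the leading Cantor-normal-form exponent of the sum is the maximum of the leading exponents, which is below $\rho$.
\item Hence $\rk_{T(\GG|R)}(\langle x\rangle)<\omega^\rho$ for every $x\in R$, so $\rk(T(\GG|R))\leq\omega^\rho<\omega^{\rho+1}=\beta$.
\end{itemize}
Alternatively, keep your estimate and take $\beta=\omega^{\rho+1}+1$. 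The union lemma $\rk(S\cup S')\leq\rk(S)\oplus\rk(S')$ is correct. At a node lying in only one tree, prefix-closure of the other means the subtree is unchanged. At common nodes, use strict monotonicity of $\oplus$ in each argument. The labeling and decoder (with $n\leq\min F$ and $D_{\geq x}$) are equivalent to the paper's $n<\min F$ and $D_{>n}$ versions.

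Here is how the routes compare.
\begin{itemize}
\item The paper does not use the front of Lemma~\ref{lem:high-front} directly. It builds the successor front $\GG=\{\{x\}\cup E:x<\min E,\ E\in\FF\}$ and fixes one first element $x$. It then applies clopen Ramsey a second time, to the finite colouring $\chi_x$ (least omitted input $n<x$) on $\FF|H_{>x}$. This yields a single $n$ with $T(\FF|K)\subseteq T_n(C)$. Because the rank bound of $\FF$ holds on every thinning $K$, any $\beta>\rho$ suffices and no ordinal arithmetic is needed.
\item Your argument avoids the successor front and the second Ramsey step. The price is the natural-sum lemma and a $\beta$ above $\omega^\rho$, which is harmless since $\omega_1^A$ is closed under exponentiation.
\item Your fallback, applied to $\GG=\FF$ itself, is closer to the paper but needs one more ingredient. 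You would have to choose $x$ so that the section of the front below $\langle x\rangle$ has rank at least $\rho$ on every thinning. This is available from the proof of Lemma~\ref{lem:high-front}: take $x$ beyond the code of the element of $W$ of rank $\rho$. It is not, however, part of the lemma's statement. The successor front is exactly the device that makes this automatic.
\end{itemize}
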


\begin{proof}
Let $\rho=\rk(\TT_\Gamma(C))$.
Since $\rho<\omega_1^A$, fix an $A$-computable well-order
$W$ of order type $\beta$ such that
\[
 \rho<\beta<\omega_1^A.
\]
This is an existence choice of a fixed index for $W$.
We do not claim that such an index can be obtained uniformly
from $A$ and an index for $\Gamma$.

Let $\FF$ be the $A$-decidable front on $A$ supplied by
Lemma~\ref{lem:high-front}. Thus
\[
 \rk(T(\FF|K))\geq\beta
 \qquad(K\in[A]^\omega).
\]
Define
\begin{equation}\label{eq:successor-front}
 \GG=\{\{x\}\cup E:x\in A,\ E\in\FF,\ x<\min E\}.
\end{equation}
Membership in $\GG$ is decidable in $A$: a finite set $F$
belongs to $\GG$ exactly when $F\subseteq A$, $|F|\geq2$,
and $F\setminus\{\min F\}\in\FF$.
If two members of $\GG$ are comparable under the
initial-segment relation, they have the same first element;
removing that element gives two comparable members of $\FF$,
which must be equal.
Finally, every $D\in[A]^\omega$ has an initial segment in $\GG$:
take $x=\min D$ and the initial segment in $\FF$ of $D_{>x}$.
Hence $\GG$ is a front on $A$.

For $F\in\GG$, define
\begin{equation}\label{eq:leaf-color}
 \ell(F)=1\iff
 \left\{
 \begin{array}{l}
 \Gamma_{|F|}(F)\subseteq\Graph(A),\\[1mm]
 \forall n<\min F\ \exists b<2\
 ((n,b)\in\Gamma_{|F|}(F)).
 \end{array}\right.
\end{equation}
Both conditions concern a finite computable approximation,
so $\ell$ is $A$-computable.

\medskip\noindent
\textbf{Nonemptiness of side 1.}
Apply the clopen Ramsey theorem to the labeling of $\GG|C$.
There exist an infinite $H\subseteq C$ and a color $j<2$
such that $\ell(F)=j$ for every $F\in\GG|H$.
If $j=1$, then $H\in\HH_1$.

Suppose instead that $j=0$.
Fix $x\in H$ with $x>0$.
For every $E\in\FF|H_{>x}$, the set $\{x\}\cup E$
belongs to $\GG|H$ and has color $0$.
Since $\{x\}\cup E\subseteq C$, positivity gives
\[
 \Gamma_{|E|+1}(\{x\}\cup E)
 \subseteq\Gamma(C)=\Graph(A).
\]
Thus the safety condition in \eqref{eq:leaf-color} holds,
and some input below $x$ must be omitted.
Define the finite coloring
\[
 \chi_x(E)=
 \min\{n<x:(n,0),(n,1)\notin
 \Gamma_{|E|+1}(\{x\}\cup E)\}
\]
on $\FF|H_{>x}$.
This is well-defined and takes values in
$\{0,\ldots,x-1\}$.

Apply the clopen Ramsey theorem again.
We obtain $K\in[H_{>x}]^\omega$ and $n<x$ such that
$\chi_x(E)=n$ for every $E\in\FF|K$.
Consequently,
\[
 (n,0),(n,1)\notin
 \Gamma_{|E|+1}(\{x\}\cup E)
 \qquad(E\in\FF|K).
\]
By \eqref{eq:monotonicity},
\[
 \Gamma_{|E|}(E)
 \subseteq
 \Gamma_{|E|+1}(\{x\}\cup E).
\]
Hence the increasing enumeration of every $E\in\FF|K$
belongs to $T_n(C)$.
Since $T_n(C)$ is closed under initial segments,
\[
 T(\FF|K)\subseteq T_n(C).
\]
Using Lemmas~\ref{lem:high-front} and~\ref{lem:rank},
we obtain the contradiction
\begin{equation}\label{eq:contradiction}
 \beta
 \leq\rk(T(\FF|K))
 \leq\rk(T_n(C))
 <\rho<\beta.
\end{equation}
Therefore $j=1$, and $\HH_1$ is nonempty.

\medskip\noindent
\textbf{Decoding without recognizing the front.}
Define $\Psi^D(n)$ using only $D$ and the ordinary index
of $\Gamma$.
Read the elements of $D_{>n}$ in increasing order.
At iteration $k\geq1$, let $F_k$ consist of the first $k$
elements read and compute the finite set $\Gamma_k(F_k)$.
If there is a $b<2$ such that $(n,b)\in\Gamma_k(F_k)$,
return the least such $b$.
Otherwise continue to the next iteration.

This procedure uses neither $A$ nor $C$, and does not
consult $\GG$ or $\ell$.
In particular, it does not test whether $F_k$ is a leaf.

To verify correctness, fix $D\in\HH_1$ and $n\in\N$.
Since
\[
  \GG|D_{>n}\subseteq\GG|D,
\]
the infinite set $D_{>n}$ still belongs to $\HH_1$.
Let $F$ be its unique initial segment in $\GG$, and put
$m=|F|$, so that $F=F_m$ in the notation of the algorithm.

Since $\ell(F_m)=1$ and $n<\min F_m$, we have
\[
 \Gamma_m(F_m)\subseteq\Graph(A),
 \qquad
 (n,A(n))\in\Gamma_m(F_m).
\]
For every $k\leq m$, monotonicity gives
\begin{equation}\label{eq:before-certificate}
 \Gamma_k(F_k)
 \subseteq\Gamma_m(F_m)
 \subseteq\Graph(A).
\end{equation}
Thus the procedure halts by the $m$-th iteration,
and every answer available up to and including that
iteration is correct.
It follows that $\Psi^D(n)=A(n)$.
Since $D$ and $n$ were arbitrary,
$\Psi^D=A$ for every $D\in\HH_1$.
\end{proof}

\begin{remark}\label{rem:safety}
We have not proved that $\Gamma(R)$ is safe for every $R\in\HH_1$:
incorrect outputs could appear after the first leaf.
The proof uses precisely \eqref{eq:before-certificate}; the new
program halts before proceeding beyond the relevant certificate.
Simply applying $\Gamma$ to all of $R$, without this stopping rule,
would not be justified.
\end{remark}

\section{Reconstructing a solution from its color profile}\label{sec:profiles}

The remaining task is to reconstruct a selected homogeneous set,
not just to compute the original target from it. We encode the
choices needed for this reconstruction in a color profile on finite
sets. The proof has two complementary parts: the profile determines
a canonical solution, and every infinite subset of that solution,
together with the base oracle, recovers the profile. Recovery must
work on all finite-set inputs, since repeating a least-candidate
search also tests candidates outside the final solution.
The class of canonical outputs will be analytic relative to the
base oracle. The only external result used in this section is the
clopen Ramsey theorem.

\begin{theorem}[A relative UI basis for a clopen side]\label{thm:profile-basis}
Let $V$ be an infinite $Z$-computable set, let $\FF$ be a front on
$V$ decidable in $Z$, and let $\ell:\FF\to q$ be $Z$-computable,
where $1\leq q<\omega$.
Fix $i<q$ and suppose that the homogeneous side
\[
\HH_i=\{H\in[V]^\omega:\ell(F)=i\text{ for every }F\in\FF|H\}
\]
is nonempty. There exist a nonempty $\Sigma^1_1(Z)$ class
$\KK\subseteq\HH_i$ and a single ordinary functional $\Xi$ such that
\begin{equation}\label{eq:relative-basis}
\forall R\in\KK\ \forall D\in[R]^\omega\qquad
\Xi^{Z\oplus D}=R.
\end{equation}
\end{theorem}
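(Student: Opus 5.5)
The plan is to attach to each candidate solution $R\in\HH_i$ a \emph{color profile} $P_R:[V]^{<\omega}\to q$, recording for every finite $s\subseteq V$ (not only $s\subseteq R$) the eventual color of $s$ followed by tails of $R$. Concretely, for $s\in[V]^{<\omega}$ and $X\in[R_{>\max s}]^\omega$, put $c_s(X)=c(s\cup X)$, where $c$ is the clopen coloring induced by $\ell$. Call $R$ \emph{profile-homogeneous} if for every finite $s$ the coloring $c_s$ is constant on $[R_{>\max s}]^\omega$, and let $P_R(s)$ be that constant. Then $Z\oplus D$ computes $P_R$ for every $D\in[R]^\omega$. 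Given $s$, read $s$ followed by the elements of $D_{>\max s}$ until the $Z$-decidable front $\FF$ is met; then output $\ell$ of that member. This is correct because $s\cup D_{>\max s}$ has the form $s\cup X$ with $X\subseteq R_{>\max s}$. The procedure is uniform and uses only $Z$ and $D$.

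Next, $R$ must be reconstructible from $P_R$ by a canonical least-candidate search. Fix an arithmetical-in-$Z$ rule $G$: $r_0$ is the least $x\in V$ satisfying a condition on $P$-values of subsets of $\{x\}$, and $r_{k+1}$ is the least $x>r_k$ in $V$ such that $P(t)=i$ for all $t\subseteq\{r_0,\dots,r_k,x\}$, together with any further finitary coherence clauses needed below. Let
\[
\KK=\{R\in[V]^\omega:\ R\text{ is profile-homogeneous},\ R=G(P_R),\ R\in\HH_i\}.
\]
Profile-homogeneity, the fixed-point equation and membership in $\HH_i$ are each expressible by quantifying over a profile $P$ as a real. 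The clauses ``$P(s)=c(s\cup X)$ for all $X$'' are $\Pi^1_1$-looking, but since $c$ is clopen they reduce to arithmetical statements in $Z\oplus R\oplus P$: every front member of the form $s\cup E$, with $E\subseteq R_{>\max s}$ finite, has label $P(s)$. Hence $\KK$ is $\Sigma^1_1(Z)$, with existential quantification over $P$. The decoder $\Xi$ first computes $P_R$ from $Z\oplus D$ as above and then runs $G$. Because $P_R$ is computed correctly on all finite inputs, including those outside $R$, the least-candidate search reproduces $R$ exactly, which gives \eqref{eq:relative-basis}.

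The main obstacle is nonemptiness of $\KK$, which is a fixed-point problem: $R$ must be profile-homogeneous and must also be the greedy output of its own profile. I would build $R$ by a fusion argument inside a fixed $H\in\HH_i$. At stage $k$ we have $r_0<\dots<r_{k-1}$ and an infinite reservoir $H_k$. We apply the finite-color clopen Ramsey theorem \cite[Theorem 1.2]{MV} successively to the finitely many colorings $c_s$ for $s\subseteq[0,m]$, where $m$ bounds the next candidate, shrinking $H_k$ so that these colorings are decided. Subsets of $\HH_i$ stay in $\HH_i$, and the colorings are then fixed forever because later reservoirs are subsets. This freezes $P(s)$ for all $s$ below the current bound. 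We then choose $r_k$ to be the least $x$ above $r_{k-1}$ that $G$ accepts under the frozen values, and check that $x$ is still available, i.e.\ lies in the reservoir or can be placed there. This availability check is the delicate point: a least candidate accepted by $G$ might not belong to the shrunken reservoir. I would first try to fix this by strengthening $G$'s acceptance clause to require $P(\{r_0,\dots,r_{k-1},x\}\cup t)=i$ for the relevant $t$, so that acceptance forces $x$ to be compatible with some homogeneous extension. Then $x$ can be adjoined by restarting the reservoir inside $\{x\}\cup H_{>x}$, using that the side is closed under the operation $F\cup H_{>\max F}$ noted after the definition of fronts. Frozen values below $x$ are unaffected, so the fusion limit $R$ satisfies $R=G(P_R)$ and lies in $\HH_i$.
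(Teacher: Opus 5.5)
There is a genuine gap. Your notion of profile-homogeneity cannot be satisfied in general, so your class $\KK$ can be empty even when the hypotheses hold, and no fusion can produce a member.

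Here is a counterexample. Take $V=\N$, $\FF=[\N]^2$, $q=2$, $\ell(\{a,b\})=1$ iff $b=a+1$ (for $a<b$), and $i=0$. Then $\HH_0$ contains the even numbers. For any infinite $R$ with increasing enumeration $r_0<r_1<\cdots$, put $s=\{r_1-1\}$. Then $R_{>\max s}=\{r_1,r_2,\ldots\}$, and
\[
c_s(\{r_1,r_2,\ldots\})=\ell(\{r_1-1,r_1\})=1,\qquad c_s(\{r_2,r_3,\ldots\})=\ell(\{r_1-1,r_2\})=0,
\]
so $c_s$ is not constant on $[R_{>\max s}]^\omega$.

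The structural reason is that your condition makes each $r_k$ cohere with every finite $s\subseteq V$ with $\max s<r_k$. That family depends on $r_k$ itself and includes sets whose maximum sits immediately below $r_k$. Shrinking reservoirs only controls $c_s$ on sets $X\subseteq H_k$, not on sets $X$ containing previously chosen $r_j>\max s$. Strengthening the acceptance clause on subsets of $\{r_0,\ldots,r_{k-1},x\}$ says nothing about $s\not\subseteq R$, and those are exactly the inputs your decoder must handle. The ``availability'' difficulty you flag is a symptom of this, not a technicality.

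The paper avoids this by changing what the profile must match and by decoupling the profile from $R$.

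It first builds $\tau$ from a fixed $H\in\HH_i$. It applies clopen Ramsey along an enumeration $(E_s)$ of all finite subsets of $V$, producing nested sets $H_s$, and takes a diagonal set $G$. This yields only the threshold coherence \eqref{eq:coherence}.

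Then $R$ is defined from $\tau$ by a least-candidate search. The candidate $r_s$ must satisfy $\tau(E\cup\{x\})=\tau(E)$ only for $E\subseteq V\cap[0,b_s]$, where $b_s=\max(\{s\}\cup\{r_j:j<s\})$ is fixed before any candidate is examined. Coherence guarantees the search terminates.

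Crucially, the decoder reads $D$ from \emph{position} $\max E+1$, i.e.\ $d_j=p_D(\max E+1+j)$, not from the numerical tail $D_{>\max E}$. Since $d_j=r_{s_j}$ with $s_j\geq \max E+1+j$, every $Q_j$ lies in $V\cap[0,b_{s_j}]$, and the regeneration condition preserves $\tau$.

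So $\tau$ is realized on positional tails of $R$, which is achievable, rather than on numerical tails, which is not. Moreover, $R\in\HH_i$ and $\Delta^{Z\oplus D}=\tau$ are verified after the fact, so no fixed-point argument is needed.

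In the example above this gives $\tau(\{a\})=0$ and $R=\{2,4,6,\ldots\}$. For $E=\{3\}$, the positional reading starts at $p_R(4)=10$, skipping the element $4$ that the numerical reading of $D_{>3}$ would hit.

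The rest of your outline is sound and matches the paper once the profile is repaired this way. That includes composing the profile decoder with the greedy search, and the $\Sigma^1_1(Z)$ bound via an existential quantifier over the profile, using clopenness to make the homogeneity clauses arithmetical.
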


\begin{proof}
Fix indices for the procedures computing $V$, $\FF$, and
$\ell$ relative to $Z$.
For $D\in[V]^\omega$, let $c(D)=\ell(F_D)$, where $F_D$
is the unique initial segment of $D$ in $\FF$.
For each $E\in[V]^{<\omega}$, define
\[
 c_E(S)=c(E\cup S)
 \qquad(S\in[V_{>\max E}]^\omega).
\]
The map $S\mapsto E\cup S$ is continuous, so $c_E$ is
clopen.

Call $E\in[V]^{<\omega}$ terminal if some $F\in\FF$
is an initial segment of $E$.
Terminality is decidable in $Z$ by checking the finitely
many initial segments of $E$.
The witnessing $F$ is unique, and in this case $c_E$
is constantly equal to $\ell(F)$.

\medskip\noindent
\textbf{Existence of a coherent profile.}
Fix $H\in\HH_i$.
Choose a $Z$-computable enumeration without repetition
of all finite subsets $E_s$ of $V$, with
$E_0=\varnothing$.
Put $H_{-1}=H$.
At stage $s$, apply the clopen Ramsey theorem to $c_{E_s}$
on the infinite set $(H_{s-1})_{>\max E_s}$.
Choose an infinite
\[
 H_s\subseteq(H_{s-1})_{>\max E_s}
\]
such that $c_{E_s}$ is constant on $[H_s]^\omega$,
and denote its constant value by $\tau(E_s)$.

Choose $u_s\in H_s$ so that the sequence $(u_s)$ is
strictly increasing, and let $G=\{u_s:s\in\N\}$.
For every fixed $s$, all but finitely many elements of
$G$ belong to $H_s$.
We have $\tau(\varnothing)=i$, and whenever $E$ is
terminal with initial leaf $F$, we have
$\tau(E)=\ell(F)$.

Moreover,
\begin{equation}\label{eq:coherence}
 \forall E\in[V]^{<\omega}\ \exists N\geq\max E\
 \forall x\in G\quad
 x>N\Longrightarrow\tau(E\cup\{x\})=\tau(E).
\end{equation}
To prove this, fix $E=E_s$ and choose $N\geq\max E$
such that $G_{>N}\subseteq H_s$.
For any $x\in G_{>N}$, let $t$ satisfy
$E_t=E\cup\{x\}$.
Choose an infinite tail $K$ of $G$ contained in
$H_s\cap H_t$ and lying above $x$.
Then
\[
 \tau(E)
 =c_E(\{x\}\cup K)
 =c_{E\cup\{x\}}(K)
 =\tau(E\cup\{x\}).
\]

Using a fixed coding of finite sets, extend $\tau$
by zero on finite sets not contained in $V$.
Call a pair $(\tau,G)$ admissible if $G\in[V]^\omega$,
the values of $\tau$ lie in $q$, $\tau$ is zero outside
$[V]^{<\omega}$, $\tau(\varnothing)=i$, $\tau$ has the
prescribed values on terminal sets, and
\eqref{eq:coherence} holds.
The preceding argument proves that an admissible pair
exists.
No computability in $Z$ is asserted for $\tau$, $G$,
or the sets $H_s$.
Only the stated admissibility conditions will be used below.

\medskip\noindent
\textbf{Canonical reconstruction from $Z\oplus\tau$.}
Fix any admissible pair $(\tau,G)$.
Define $(r_s)$ recursively.
At stage $s$, let
\[
 b_s=\max\bigl(\{s\}\cup\{r_j:j<s\}\bigr),
\]
and choose the least $x\in V$ with $x>b_s$ such that
\begin{equation}\label{eq:regeneration}
 \forall E\subseteq V\cap[0,b_s]\qquad
 \tau(E\cup\{x\})=\tau(E).
\end{equation}
Put $r_s=x$.
This is a finite test decidable in $Z\oplus\tau$.
For each of the finitely many sets $E$ involved,
\eqref{eq:coherence} supplies a threshold.
Every sufficiently large element of $G$ exceeds all
these thresholds and $b_s$, and hence is a candidate.
Thus the search for the least candidate terminates.

Let $R=\{r_s:s\in\N\}$.
The sequence is strictly increasing and $r_s>s$.
The reconstruction procedure uses $Z\oplus\tau$,
but does not query $G$.

For every finite $E\subseteq R$, we have $\tau(E)=i$.
Indeed, insert the elements of $E$ in the order of
their construction.
When an element $r_s$ is inserted, the previously
inserted elements lie in $V\cap[0,b_s]$, so
\eqref{eq:regeneration} preserves the value of $\tau$.
The initial value is $\tau(\varnothing)=i$.
In particular, if $E\in\FF|R$, then $E$ is terminal
and $\ell(E)=\tau(E)=i$.
Therefore $R\in\HH_i$.

\medskip\noindent
\textbf{Recovering the profile from $Z\oplus D$.}
We describe a single procedure $\Delta^{Z\oplus D}$
on finite-set inputs.
Given $E$, return zero if $E\not\subseteq V$.
If $E$ is terminal, return the color of its initial
leaf.
Otherwise put
\[
 m=\max E+1
\]
with $m=0$ when $E=\varnothing$, and read
\[
 d_j=p_D(m+j)\qquad(j\in\N).
\]
Here $m+j$ is a position in the increasing enumeration of $D$,
not a lower bound on the numerical value of the element read.
Starting with $Q_0=E$, let
\[
 Q_t=E\cup\{d_0,\ldots,d_{t-1}\}.
\]
Continue until $Q_t$ becomes terminal, and then
return the color of its initial leaf.

To verify this procedure, fix $D\in[R]^\omega$.
Only in the verification, write
\[
 d_j=r_{s_j}.
\]
Because $D$ is a subset of the range of the increasing
sequence $(r_s)$, the indices $s_j$ are strictly
increasing and satisfy
\[
 s_j\geq m+j.
\]
Hence $\max E<s_j<r_{s_j}=d_j$.
Every earlier $d_h$, for $h<j$, was chosen before
stage $s_j$.
It follows that
\[
 Q_j\subseteq V\cap[0,b_{s_j}].
\]
Applying \eqref{eq:regeneration} at stage $s_j$ gives
$\tau(Q_{j+1})=\tau(Q_j)$.
By induction,
\begin{equation}\label{eq:profile-recovery}
 \tau(E\cup\{d_0,\ldots,d_{t-1}\})=\tau(E)
 \qquad(t\in\N).
\end{equation}

The sets $Q_t$ are initial segments of the same
infinite set
\[
 E\cup\{d_j:j\in\N\}\subseteq V.
\]
Since $\FF$ is a front on all of $[V]^\omega$,
one of the $Q_t$ is terminal.
If $F$ is its initial leaf, admissibility and
\eqref{eq:profile-recovery} give
\[
 \ell(F)=\tau(Q_t)=\tau(E).
\]
Thus the procedure terminates and returns $\tau(E)$.
The same conclusion holds in the terminal and
out-of-domain cases by definition.
Therefore
\[
 \Delta^{Z\oplus D}=\tau
 \qquad(D\in[R]^\omega).
\]
This equality holds on all finite-set inputs, not merely
on finite subsets of $R$. This will allow us to repeat
the least-candidate search, including its tests on
candidates outside $R$.

The procedure $\Delta$ uses only $Z\oplus D$ and
the fixed indices for $V$, $\FF$, and $\ell$.
It does not query $R$, $G$, or $\tau$, and it does
not compute the indices $s_j$.

Now define $\Xi^{Z\oplus D}$ by repeating the
canonical reconstruction, answering each query to
$\tau$ by the procedure $\Delta^{Z\oplus D}$.
Induction on $s$ shows that this reproduces the same
$r_s$: once the earlier elements have been reproduced,
the bound $b_s$, every candidate test, and the least
successful candidate are unchanged.
To decide whether $n\in R$, generate the sequence
until the first element at least $n$ is reached.
This search terminates since $r_s>s$.
Thus
\[
 \Xi^{Z\oplus D}=R
 \qquad(D\in[R]^\omega).
\]
The index of $\Xi$ is independent of the admissible
pair and of the corresponding output $R$.

\medskip\noindent
\textbf{The analytic class.}
Let $\KK$ consist of all sets $R$ for which there
exist a profile $\tau$, an infinite set $G$, and
a sequence $r\in\N^\N$ such that $(\tau,G)$ is
admissible, $r$ satisfies the least-candidate recursion
above, and $R=\rng(r)$.

Admissibility is arithmetical in $Z,\tau,G$.
In particular, the quantifier over finite sets in
\eqref{eq:coherence} is a number quantifier.
The least-candidate recursion and the condition
$R=\rng(r)$ are also arithmetical in the displayed
parameters.
Consequently, $\KK$ is $\Sigma^1_1(Z)$.

The existence of an admissible pair proves that
$\KK$ is nonempty.
For every admissible pair, the preceding verification
shows that its output belongs to $\HH_i$ and that
the same functional $\Xi$ recovers that output from
$Z\oplus D$ for every infinite subset $D$ of the output.
Hence $\KK\subseteq\HH_i$ and
\eqref{eq:relative-basis} holds.

The front and its labeling are fixed $Z$-computable
objects.
Their validity as a front and labeling is not imposed
again in the formula defining $\KK$.
Likewise, universal correctness of $\Xi$ on all
infinite subsets is a consequence of the verification,
not an additional clause in that formula.
\end{proof}

\begin{remark}\label{rem:escape}
The new set $R$ is contained in $V$ and preserves the homogeneous
color, but we do not assert that $R\subseteq H$, where $H$ was the
initial solution. That solution is used to prove the existence of
the profile. The subsequent canonical choices may lie elsewhere
in $V$. The class $\KK$ itself need not be closed under taking
infinite subsets.
\end{remark}

\section{Composition, both parts of Q1.7, and degree control}
\label{sec:conclusion}

The two preceding constructions provide complementary computations.
On the homogeneous side obtained by certification, each infinite
subset computes $A$ without an additional oracle. The profile
construction recovers the selected set using that subset together
with $A$. Composing these procedures removes the base oracle.
We first state the resulting local criterion, whose hypotheses do
not require the ambient set to be introreducible or introenumerable;
Lemma~\ref{lem:ghpt} supplies them for every infinite introenumerable
set. The ordinal and hyperjump corollaries then select members of
the same analytic class using standard basis theorems.

\begin{theorem}[A local criterion with ordinal preservation]
\label{thm:local}
Let $A$ be infinite. Suppose that there is
$C\in[A]^\omega$ such that
\[
 A\ui C,
 \qquad
 \omega_1^C\leq\omega_1^A.
\]
Then there exist a nonempty $\Sigma^1_1(A)$ class
$\KK\subseteq[A]^\omega$ and two ordinary functionals
$\Psi,\Theta$ such that
\[
 \forall R\in\KK\ \forall D\in[R]^\omega\qquad
 \Psi^D=A
 \quad\text{and}\quad
 \Theta^D=R.
\]
In particular, every member of $\KK$ is uniformly
introreducible.
The set $A$ need not be introreducible.
\end{theorem}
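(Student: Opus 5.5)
We first turn the hypothesis $A\ui C$ into a positive graph operator. Fix an ordinary $\Psi_0$ with $\Psi_0^D=A$ for all $D\in[C]^\omega$. The relative c.e. operator that enumerates $(n,b)$ whenever $\Psi_0^D(n)\downarrow=b$ witnesses $\Graph(A)\uice C$. By \cite[Proposition 2.1]{GHPT}, as in the last step of Lemma~\ref{lem:ghpt}, there is an ordinary positive $\Gamma$ with $\Gamma(D)=\Graph(A)$ for every $D\in[C]^\omega$. Lemma~\ref{lem:rank} then gives $\rk(\TT_\Gamma(C))<\omega_1^C\leq\omega_1^A$. This is exactly the hypothesis of Proposition~\ref{prop:nucleus}.

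That proposition yields an $A$-decidable front $\GG$ on $A$, an $A$-computable labeling $\ell:\GG\to\{0,1\}$ whose side $\HH_1$ is nonempty, and an ordinary $\Psi$ with $\Psi^D=A$ for all $D\in\HH_1$.

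Next apply Theorem~\ref{thm:profile-basis} with $Z=A$, $V=A$ (which is $A$-computable), front $\GG$, $q=2$ and $i=1$. This gives a nonempty $\Sigma^1_1(A)$ class $\KK\subseteq\HH_1\subseteq[A]^\omega$ and an ordinary $\Xi$ with $\Xi^{A\oplus D}=R$ for all $R\in\KK$ and $D\in[R]^\omega$.

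The key observation for composing the two is that $\HH_1$ is closed under infinite subsets, as noted after the definition of fronts. So for $R\in\KK$ and $D\in[R]^\omega$ we have $D\in\HH_1$, and hence $\Psi^D=A$. Define $\Theta^D$ to run $\Xi$ on oracle $A\oplus D$, answering each query to the $A$-half by computing $\Psi^D$ and each query to the $D$-half by consulting $D$ directly. Since $\Psi^D$ is total and equals $A$ for every such $D$, this is a single ordinary functional with $\Theta^D=\Xi^{A\oplus D}=R$. This gives \eqref{eq:main} for all $R\in\KK$ and $D\in[R]^\omega$.

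Each $R\in\KK$ is infinite, and the single index of $\Theta$ witnesses $R\ui R$, so every member of $\KK$ is uniformly introreducible. No introreducibility of $A$ was used; the hypotheses enter only through Lemma~\ref{lem:rank} and Proposition~\ref{prop:nucleus}.

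The main point requiring care is the composition step. It needs $\Psi$ to be correct on every infinite subset of $R$, not merely on $R$ itself. This is supplied by the closure of $\HH_1$ under infinite subsets together with $\KK\subseteq\HH_1$. It also needs $\Theta$ to be uniform across the whole class, which holds because the indices of $\Psi$ and $\Xi$ were fixed independently of $R$. A smaller check is that the rank bound is taken against $\omega_1^A$ rather than $\omega_1^C$, which uses the hypothesis $\omega_1^C\leq\omega_1^A$.
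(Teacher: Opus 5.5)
Your proposal is correct and follows the paper's proof step for step. The steps are: positive compilation of $A\ui C$ into $\Gamma$; the bound $\rk(\TT_\Gamma(C))<\omega_1^C\leq\omega_1^A$ from Lemma~\ref{lem:rank}; Proposition~\ref{prop:nucleus}; Theorem~\ref{thm:profile-basis} with $Z=V=A$ and color $1$; and the composition $\Theta^D=\Xi^{\Psi^D\oplus D}$, justified by $\KK\subseteq\HH_1$ and the closure of $\HH_1$ under infinite subsets. The paper only spells out more explicitly how the simulation treats divergent or non-binary answers of $\Psi^D$, which does not matter here because $\Psi^D=A$ for every relevant $D$.
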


\begin{proof}
The uniform reduction $A\ui C$ uniformly enumerates
$\Graph(A)$ from every infinite subset of $C$.
By the positive compilation of
\cite[Proposition 2.1]{GHPT}, fix an ordinary positive
enumeration operator $\Gamma$ such that
\[
 \Gamma(D)=\Graph(A)
 \qquad(D\in[C]^\omega).
\]
Lemma~\ref{lem:rank} gives
\[
 \rk(\TT_\Gamma(C))
 <\omega_1^C
 \leq\omega_1^A.
\]

By Proposition~\ref{prop:nucleus}, there exist an
$A$-decidable front $\GG$ on $A$, an $A$-computable
binary labeling of $\GG$, and a nonempty homogeneous
side $\HH_1$ such that a fixed ordinary functional
$\Psi$ satisfies
\[
 \Psi^D=A
 \qquad(D\in\HH_1).
\]

Apply Theorem~\ref{thm:profile-basis} with $Z=A$,
$V=A$, front $\GG$, and color $1$.
We obtain a nonempty $\Sigma^1_1(A)$ class
\[
 \KK\subseteq\HH_1\subseteq[A]^\omega
\]
and a single ordinary functional $\Xi$ such that
\[
 \Xi^{A\oplus D}=R
 \qquad(R\in\KK,\ D\in[R]^\omega).
\]

If $R\in\KK$ and $D\in[R]^\omega$, then
$D\in\HH_1$, since $\HH_1$ is closed under taking
infinite subsets.
Consequently, $\Psi^D=A$ for every such $R$ and $D$.

Define an ordinary functional $\Theta$ by simulating
$\Xi$ with a virtual joined oracle.
Queries to the second component are answered directly
using $D$.
A query at $m$ to the first component is answered by
running $\Psi^D(m)$.
If a required subcomputation diverges, the simulation
does not proceed past that query.
If it returns a value outside $\{0,1\}$, the simulation
diverges; otherwise its value is used as the answer
to the oracle query.
Thus, whenever $\Psi^D$ is total and binary-valued,
the simulation satisfies
\begin{equation}\label{eq:composition}
 \Theta^D=\Xi^{\Psi^D\oplus D}.
\end{equation}

Fix $R\in\KK$, $D\in[R]^\omega$, and $n\in\N$.
The computation $\Xi^{A\oplus D}(n)$ halts and
therefore makes only finitely many oracle queries.
Every query to its first component is replaced by
a terminating computation $\Psi^D(m)=A(m)$.
Hence the simulation follows the same computation,
with only finite delays, and returns
\[
 \Theta^D(n)=\Xi^{A\oplus D}(n)=R(n).
\]
The computation of $\Psi^D(m)$ does not invoke $\Xi$,
so this simulation is not circular.

The indices of $\Psi$ and $\Xi$ are fixed independently
of $R$ and $D$, and the same is therefore true of the
index of $\Theta$.
We have proved both required equalities for every
$R\in\KK$ and every $D\in[R]^\omega$.
The class $\KK$ itself is unchanged, so it remains
nonempty and $\Sigma^1_1(A)$.
\end{proof}

\begin{proof}[Proof of Theorem~\ref{thm:main}]
Let $A$ be infinite and introenumerable.
By Lemma~\ref{lem:ghpt}, there is $C\in[A]^\omega$ such that
\[
  A\ui C,\qquad \omega_1^C=\omega_1^A.
\]
Apply Theorem~\ref{thm:local}.
\end{proof}

\begin{corollary}[The first part of Question 1.7]\label{cor:first}
Every infinite introreducible set has an infinite uniformly
introreducible subset.
\end{corollary}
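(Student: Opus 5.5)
The plan is to reduce Corollary~\ref{cor:first} directly to Theorem~\ref{thm:main}. Let $A$ be an infinite introreducible set. The first step is to check that $A$ is introenumerable. For each $D\in[A]^\omega$ there is an index $e$ with $\Phi_e^D=A$. The relative c.e.\ operator that, with oracle $D$, enumerates every $n$ with $\Phi_e^D(n)\downarrow=1$ then enumerates exactly $A$. Hence $A\ice A$, which is the observation already recorded in the conventions: every introreducible set is introenumerable.

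Theorem~\ref{thm:main} then gives a nonempty class $\KK\subseteq[A]^\omega$ and an ordinary functional $\Theta$ such that $\Theta^D=R$ for all $R\in\KK$ and $D\in[R]^\omega$. Pick any $R\in\KK$; this is possible because $\KK$ is nonempty. Then $R$ is an infinite subset of $A$. The single index of $\Theta$ witnesses $R\ui R$, since it does not depend on $D$, so $R$ is uniformly introreducible. This completes the argument.

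Alternatively, one could go through the route noted in ``The introreducible special case'': apply \cite[Corollary 3.13]{GHPT} to the target $\Graph(A)$ to obtain $C$ with $A\ui C$ and $\omega_1^C=\omega_1^A$, and then invoke Theorem~\ref{thm:local}. This avoids \cite[Proposition 5.3]{GHPT}. I would mention it only as a remark.

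No step here is a real obstacle. The only point that needs care is that uniform introreducibility of $R$ requires a functional recovering $R$ itself, not $A$. It is therefore $\Theta$, and not $\Psi$, that supplies the witness.
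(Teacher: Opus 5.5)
Your proof is correct and is exactly the paper's argument: introreducibility implies introenumerability, so Theorem~\ref{thm:main} applies, and any $R\in\KK$ is an infinite subset of $A$ for which the fixed functional $\Theta$ (not $\Psi$) witnesses $R\ui R$. Your alternative route via \cite[Corollary 3.13]{GHPT} and Theorem~\ref{thm:local} also matches the paper's own remark on the introreducible special case.
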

\begin{proof}
Every introreducible set is introenumerable, so
Theorem~\ref{thm:main} applies.
\end{proof}

\begin{corollary}[The second part of Question 1.7]\label{cor:second}
Every infinite introenumerable set $A$ has an infinite uniformly
introenumerable subset. In fact, for the class $\KK$ in
Theorem~\ref{thm:main}, there is a single ordinary positive enumeration
operator $\Lambda$ such that
\[
 \Lambda(D)=R
 \qquad(R\in\KK,\ D\in[R]^\omega).
\]
\end{corollary}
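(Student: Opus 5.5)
The plan is to take $\Lambda$ to be the positive enumeration operator obtained by compiling the ordinary functional $\Theta$ from Theorem~\ref{thm:main}. The first part of the corollary then follows because every $R\in\KK$ is infinite, lies in $[A]^\omega$, and satisfies $\Lambda(D)=R$ for every $D\in[R]^\omega$. Hence $R\uie R$, so $R$ is uniformly introenumerable.

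To build $\Lambda$, fix $\KK$, $\Psi$ and $\Theta$ as in Theorem~\ref{thm:main}. The relative c.e. operator which, with oracle $D$, enumerates $n$ whenever $\Theta^D(n)\downarrow=1$ enumerates $R$ from every $D\in[R]^\omega$, for every $R\in\KK$ at once. The natural next step is to cite \cite[Proposition 2.1]{GHPT}, but it is stated for a single underlying set, whereas here one operator must serve all members of $\KK$. I would therefore repeat the compilation directly, as in the paragraph on why positive compilation is safe, rather than apply the proposition separately to each $R$.

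The direct compilation goes as follows. Whenever $\Theta^\sigma(n)\downarrow=1$ on a finite binary string $\sigma$, add the axiom $F_\sigma\mapsto n$, where $F_\sigma=\{u<|\sigma|:\sigma(u)=1\}$. This is an effective list of axioms depending only on the index of $\Theta$, so $\Lambda$ is ordinary.

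The main obstacle is soundness. Suppose $F_\sigma\subseteq D\subseteq R$ with $R\in\KK$, and the axiom from $\sigma$ fires. We must show $n\in R$. Take the infinite oracle $D'=F_\sigma\cup R\setminus[0,|\sigma|)$. It lies in $[R]^\omega$ and extends $\sigma$, so $\Theta^{D'}(n)=1=R(n)$. Thus no false element is enumerated.

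Completeness is easier. For $D\in[R]^\omega$ and $n\in R$, the computation $\Theta^D(n)=1$ uses a finite initial segment $\sigma$ of $D$, and $F_\sigma\subseteq D$, so the corresponding axiom fires. Hence $\Lambda(D)=R$. Since the argument is uniform in $R\in\KK$, one $\Lambda$ works for the whole class.
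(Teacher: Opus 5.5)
Your proposal is correct and follows the paper's proof essentially step for step. You use the same direct compilation of $\Theta$ into axioms $F_\sigma\mapsto n$, the same completeness argument via a finite initial-segment witness, and the same soundness argument by padding $F_\sigma$ to an infinite subset of $R$ that extends $\sigma$. The only difference is cosmetic: you pad with $R\setminus[0,|\sigma|)$ where the paper uses $D\setminus[0,|\sigma|)$, and both choices give a member of $[R]^\omega$ extending $\sigma$.
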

\begin{proof}
Let $\Theta$ be the ordinary functional supplied by
Theorem~\ref{thm:main}, so that
\[
 \Theta^D=R
 \qquad(R\in\KK,\ D\in[R]^\omega).
\]
Uniformly from the fixed functional $\Theta$, enumerate an axiom
$F_\sigma\mapsto n$ whenever $\sigma\in 2^{<\omega}$ and
a finite computation $\Theta^\sigma(n)\downarrow=1$ is found,
where
\[
 F_\sigma=\{u<|\sigma|:\sigma(u)=1\}.
\]
Here a computation with finite oracle $\sigma$ is required
to make all its oracle queries below $|\sigma|$.
This defines one ordinary positive enumeration operator
$\Lambda$, independently of $R$.

Fix $R\in\KK$ and $D\in[R]^\omega$.
If $n\in R$, the computation $\Theta^D(n)=1$ has a finite initial-segment
witness and hence produces an axiom whose support is contained in $D$.
Thus $R\subseteq\Lambda(D)$.
Conversely, if $n\in\Lambda(D)$, take a witnessing $\sigma$ with
$F_\sigma\subseteq D$ and $\Theta^\sigma(n)=1$.
The set
\[
 E=F_\sigma\cup\bigl(D\setminus[0,|\sigma|)\bigr)
\]
is infinite, is contained in $D\subseteq R$, and satisfies
\[
 E\cap[0,|\sigma|)=F_\sigma.
\]
Thus the characteristic function of $E$ extends $\sigma$, so
$\Theta^E(n)=1$.
Since $E\in[R]^\omega$, Theorem~\ref{thm:main} gives
$\Theta^E=R$. Hence $R(n)=\Theta^E(n)=1$, so $n\in R$.
This proves $\Lambda(D)\subseteq R$.
Together with the reverse inclusion proved above, we obtain
$\Lambda(D)=R$.
Since $R\in\KK$ and $D\in[R]^\omega$ were arbitrary, the same
operator $\Lambda$ works for every such pair.
\end{proof}

\begin{remark}
The uniformly introreducible conclusion is stronger as a property of
the selected witness. As existential statements for a fixed infinite
$A$, however, having a uniformly introenumerable subset and having a
uniformly introreducible subset are already equivalent by \cite[Theorem 1.4]{GHPT}.
The new step here is obtaining such a witness from a merely
introenumerable $A$ using the local criterion.
\end{remark}

\begin{corollary}[Ordinal control]\label{cor:ordinal}
Every infinite introenumerable set $A$ has an infinite UI subset
$R\subseteq A$ such that
\[
A\leq_T R,
\qquad \omega_1^R=\omega_1^A.
\]
\end{corollary}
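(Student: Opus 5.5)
We will obtain $R$ as a member of the nonempty $\Sigma^1_1(A)$ class $\KK$ supplied by Theorem~\ref{thm:main}, selected by the Gandy basis theorem relativized to $A$. Every $R\in\KK$ is already an infinite subset of $A$ and is uniformly introreducible: the functional $\Theta$ witnesses $R\ui R$. Taking $D=R$ in \eqref{eq:main} gives $\Psi^R=A$, hence $A\leq_T R$, and therefore $\omega_1^A\leq\omega_1^R$. So only the reverse inequality $\omega_1^R\leq\omega_1^A$ requires a choice of member.

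The relativized Gandy basis theorem states that every nonempty $\Sigma^1_1(A)$ class of reals has a member $R$ with $\omega_1^{A\oplus R}=\omega_1^A$. Applying it to $\KK$, we fix such an $R$. Since $R\leq_T A\oplus R$, every $R$-computable well-order is $(A\oplus R)$-computable. Hence
\[
\omega_1^R\leq\omega_1^{A\oplus R}=\omega_1^A .
\]
Combined with the previous paragraph, this gives $\omega_1^R=\omega_1^A$ together with $A\leq_T R$. We also note that $A\oplus R\equiv_T R$ here, so the two ordinals coincide in any case.

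The only genuine input is the correct form of the Gandy basis theorem relative to an oracle. The usual formulation for a nonempty $\Sigma^1_1(A)$ class gives a member $X$ with $\omega_1^{A\oplus X}=\omega_1^A$, and in fact $X\leq_T\OO^A$ with $\OO^X\leq_h\OO^A$. We need only the ordinal clause, and we cite it in that form (see \cite[Chapter~5]{Montalban}). The main point to verify is that $\KK$ is genuinely a lightface $\Sigma^1_1(A)$ class, with no hidden parameters beyond $A$. This is asserted in Theorem~\ref{thm:main}: the front, the labeling, the well-order index, and $\Gamma$ are fixed finite indices relative to $A$, so their presence does not spoil the lightface definability. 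Once that is accepted, the corollary follows immediately.
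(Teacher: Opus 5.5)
Your proposal is correct and follows the paper's proof: select $R\in\KK$ by the relativized Gandy basis theorem so that $\omega_1^{A\oplus R}=\omega_1^A$, then use $\Psi^R=A$ to get $A\leq_T R$. The paper concludes via $A\oplus R\equiv_T R$ and invariance of $\omega_1$ under Turing equivalence, while you sandwich $\omega_1^A\leq\omega_1^R\leq\omega_1^{A\oplus R}$; this is the same argument.
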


\begin{proof}
Let $\KK\subseteq[A]^\omega$ be the nonempty
$\Sigma^1_1(A)$ class supplied by Theorem~\ref{thm:main}.
Every member of $\KK$ is uniformly introreducible, and
$\Psi^R=A$ for every $R\in\KK$.

By the relativized Gandy basis theorem, as stated immediately
after \cite[Corollary 3.13]{GHPT}, choose $R\in\KK$ such that
\[
 \omega_1^{A\oplus R}=\omega_1^A.
\]
Since $\Psi^R=A$, we have $A\leq_T R$ and hence
$A\oplus R\equiv_T R$.
Therefore, by the invariance of $\omega_1^X$ under
Turing equivalence,
\[
 \omega_1^R=\omega_1^{A\oplus R}=\omega_1^A.
\]
As $R\in\KK$, it is an infinite uniformly introreducible
subset of $A$, as required.
\end{proof}

\Needspace{9\baselineskip}
\begin{corollary}[Hyperjump control]
\label{cor:hyperjump}
For every infinite introenumerable set $A$, there is an infinite
uniformly introreducible set $R\subseteq A$ such that
\[
 A\leq_T R\leq_T \OO^A,
 \qquad
 \OO^R\equiv_T\OO^A.
\]
\end{corollary}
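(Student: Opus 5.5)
We select a member of the class $\KK$ from Theorem~\ref{thm:main} using the relativized hyperjump basis theorem \cite[Corollary 2.7]{JS}, in parallel with the proof of Corollary~\ref{cor:ordinal}. Recall that $\KK\subseteq[A]^\omega$ is a nonempty $\Sigma^1_1(A)$ class, that every member of $\KK$ is uniformly introreducible, and that a fixed ordinary functional $\Psi$ satisfies $\Psi^R=A$ for every $R\in\KK$ (take $D=R$ in \eqref{eq:main}). The basis theorem, relativized to the oracle $A$, states that every nonempty $\Sigma^1_1(A)$ class has a member $X$ with $X\leq_T\OO^A$ and $\OO^{A\oplus X}\leq_T\OO^A$. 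Apply it to $\KK$ and obtain $R\in\KK$ with these two properties.

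The reductions then follow directly. Since $\Psi^R=A$, we have $A\leq_T R$, and together with $R\leq_T\OO^A$ this gives the chain $A\leq_T R\leq_T\OO^A$. Also $A\oplus R\equiv_T R$, so $\OO^R\equiv_T\OO^{A\oplus R}\leq_T\OO^A$. For the reverse inequality, use monotonicity of the hyperjump under Turing reducibility: from $A\leq_T R$ we get $\OO^A\leq_T\OO^R$. Hence $\OO^R\equiv_T\OO^A$. Finally, $R$ is infinite, contained in $A$, and uniformly introreducible because it belongs to $\KK$.

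The main point requiring care is the exact form of the relativized hyperjump basis theorem in \cite{JS}. If it only yields a member $X$ with $\OO^{X\oplus A}\leq_T\OO^A$, without the bound $X\leq_T\OO^A$, I would recover that bound from the fact that $X\leq_T\OO^{A\oplus X}$. If instead it is stated only for lightface $\Sigma^1_1$ classes, I would relativize it uniformly to the oracle $A$, since $\KK$ is defined by a $\Sigma^1_1$ formula with parameter $A$. The invariance of the hyperjump under Turing equivalence, and its monotonicity, are standard and will be used without further comment.
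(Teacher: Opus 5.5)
Your proposal is correct and follows the paper's proof: you relativize the hyperjump basis theorem to $A$ to pick $R\in\KK$, use $\Psi^R=A$ to get $A\oplus R\equiv_T R$, and transfer the bound to $\OO^R$. The only cosmetic difference is that you obtain $\OO^A\leq_T\OO^R$ from monotonicity of the hyperjump, whereas the paper reads the full equivalence $\OO^{A\oplus R}\equiv_T R\oplus\OO^A\equiv_T\OO^A$ directly off the basis theorem.
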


\begin{proof}
Let $\KK\subseteq[A]^\omega$ be the nonempty
$\Sigma^1_1(A)$ class supplied by Theorem~\ref{thm:main}.
Every member of $\KK$ is uniformly introreducible.
Taking $D=R$ in \eqref{eq:main}, we also obtain
$\Psi^R=A$ for every $R\in\KK$.

Relativizing \cite[Corollary 2.7]{JS} to the base
oracle $A$, choose $R\in\KK$ such that
\[
 \OO^{A\oplus R}
 \equiv_T R\oplus\OO^A
 \equiv_T \OO^A.
\]
Here $\OO^{A\oplus R}$ denotes the hyperjump of the
joined oracle $A\oplus R$.
In particular, $R\leq_T \OO^A$.
Moreover, $\Psi^R=A$, so $A\leq_T R$ and hence
$A\oplus R\equiv_T R$.
Since the hyperjump preserves Turing equivalence,
\[
 \OO^R\equiv_T\OO^{A\oplus R}\equiv_T\OO^A.
\]
Finally, $R\in\KK\subseteq[A]^\omega$ is infinite and
uniformly introreducible, as required.
\end{proof}

\begin{remark}
The hyperjump corollary is a separate application of a basis theorem,
not a new hyperjump inversion theorem. Both parts of Question 1.7 follow from
Theorem~\ref{thm:local} and Lemma~\ref{lem:ghpt}; the ordinal
control in Corollary~\ref{cor:ordinal} uses only the Gandy basis
theorem. The fact that $R\leq_T \OO^A$ does not imply
$R\in\HYP(A)$.
\end{remark}

\begin{remark}
The argument does not provide a uniform procedure for selecting
the decoder indices or a $\Sigma^1_1(A)$ code for the class $\KK$
from $A$. The final subset need not be contained in either of the
intermediate sets $B,C$ from Lemma~\ref{lem:ghpt}: the local criterion
reconstructs a subset of the original set $A$.
\end{remark}

\section{Conclusions and further questions}\label{sec:conclusions}

We have shown that every infinite introenumerable set contains an
infinite uniformly introreducible subset, giving affirmative answers
to both parts of \cite[Question 1.7]{GHPT}.
The proof separates uniform computation of the original set from
reconstruction of the selected subset, and then composes the two
procedures. The definability of the resulting class of witnesses
also permits separate refinements controlling computable ordinals
and hyperjumps.

A natural direction is to sharpen the complexity of the selected
subset. Can the set $R$ in Theorem~\ref{thm:main} always be chosen
with $R\in\Delta^1_1(A)$, while retaining $A\ui R$ and $R\ui R$?
For introreducible $A$, one can further ask whether $R\equiv_T A$
can always be achieved, as in \cite[Open Questions 4.17(2)]{Em}.
The ordinal and hyperjump bounds established here do not by
themselves yield either strengthening.

The argument also leaves unresolved whether every introreducible
set is finitely decomposable in the sense discussed in the
introduction; see \cite[Open Questions 4.17(4)]{Em}.
Our construction produces a suitable infinite subset without
producing a finite cover of the original set by infinite sets
$A_i$ satisfying $A\ui A_i$.
Thus the existence of a uniformly introreducible subset is settled
here independently of the finite-decomposition problem.

\section*{Acknowledgments}

The main result was discovered by ChatGPT 6 Pro (OpenAI). The author 
takes full responsibility for the mathematical content.

\end{document}